\newtheorem{theorem}{Theorem}[section]
\newtheorem{lemma}[theorem]{Lemma}
\newtheorem{corollary}[theorem]{Corollary}
\theoremstyle{definition}
\newtheorem{definition}[theorem]{Definition}
\newtheorem{example}[theorem]{Example}
\theoremstyle{remark}
\newtheorem{remark}[theorem]{Remark}
\numberwithin{equation}{section}
\begin{document}
	
	\title{Continuity of Functions on Bare Representation of Graphs under Star Topology}
	
	% Information for first author
	\author{Rodolfo E. Maza}
	\address{Map\'ua University, Intramuros, Manila 1002, Philippines}
	\email{remaza@mapua.edu.ph}
	\address{University of the Philippines-Diliman, Quezon City 1101, Philippines}
	\email{remaza@up.edu.ph}
	
	% General info
	\subjclass[2020]{Primary 05C60 }
	\keywords{Graph homomorphisms,}
	
	\begin{abstract}
		This paper introduces a novel topology, referred to as the \textit{star topology}, on finite graphs. By treating vertices and edges as points in a unified space, we explore continuous maps between Bare representations of a graph and their properties. The key distinction lies in the fact that while every graph homomorphism induces a continuous map, the converse is not generally true due to potential loss of adjacency information.
	\end{abstract}
	
	\maketitle
	
	%%%%%%%%%%%%%%%%%%%%%%%%%%%%%%%%%%%%%%%%%%%%%%%%%%%%%%%%%%%%%%%%%%%%%%%%
	\section{Introduction}
	\label{Intro:sec}
	
	Graphs are fundamental structures in mathematics, computer science, and engineering, serving as building blocks for complex systems. Various approaches exist to associate topology with graphs, each emphasizing different structural aspects—from treating graphs as 1-dimensional CW-complexes in algebraic topology \cite{massey2019basic}, to exploring graph-based spaces using adjacency relations in digital topology \cite{NoglySchladt1996}, or applying Alexandroff topologies to finite graphs leveraging their minimal neighborhood properties \cite{JafarianAmiriJafarzadeh2013}. Topological concepts like continuity and connectedness have also been reinterpreted for directed graphs with applications in group theory \cite{serre1977arbres}.
	
	However, existing frameworks often separate vertices and edges into distinct domains or prioritize adjacency preservation over holistic structural analysis. This paper introduces the \textbf{star topology} on the \textbf{bare representation} \( B(G) \) of a finite graph \( G \), where \( B(G) \) is the disjoint union of vertices \( V(G) \) and edges \( E(G) \). Unlike prior methods, this topology unifies vertices and edges into a single topological space by defining open sets using incidence relations. Specifically, neighborhoods around vertices in the star topology include both the vertex itself and all incident edges.
	
	A crucial distinction arises: while every graph homomorphism induces a continuous map in the star topology, the converse fails because continuity allows edges to map to vertices—a phenomenon incompatible with the conditions for graph homomorphisms. Evenmore, a weak-homomorphism or an egamorphism induces a continuous map while the converse is false.
	
	This paper proceeds as follows: Section \ref{Prelim:sec} introduces the basic definitions and established results. Section \ref{StarTopo:sec} formalizes the star topology and proves foundational properties like Alexandroff compatibility and establishes the equivalence between graph connectivity and topological connectedness of \( B(G) \). Section \ref{ContiMap:sec} examines continuous maps, demonstrating how vertex-identification operations maintain continuity even when violating homomorphism conditions.

	\section{Preliminaries}
	\label{Prelim:sec}
	Let us begin by defining key concepts used in this paper. We build on
	foundational ideas from Graph Theory and Topology as outlined in \cite{ChartrandZhang}
	and \cite{Munkres}.
	
	\begin{definition}\cite{ChartrandZhang}
		A \textit{graph} \( G = (V, E) \) consists of a vertex set \( V \) and an
		edge set \( E \) where \(E\) consists of 2-element subsets of \(V\). These two
		sets are presumed to be disjoint \cite{ChartrandZhang}. Occasionally, we denote by
		\(V(G)\) and \(E(G)\) the vertex and edge sets of \(G\). 
	\end{definition}
	
	\begin{definition}\cite{ChartrandZhang}
		The \textit{degree} of a vertex \(v \in V(G)\), denoted as \(\deg(v)\),
		is the number of edges incident to \(v\). Formally,
		\[
		\deg(v) = |\{e \in E(G): e \text{ is incident to } v\}|.
		\]
	\end{definition}
	
	\begin{definition}\cite{ChartrandZhang}
		A vertex \(u \in V(G)\) is called \textit{isolated} if there are no edges
		incident to it, i.e., \(\deg(u) = 0\).
	\end{definition}
	
	\begin{example}
		Consider a graph \(G\) with vertices \(v_1, v_2, v_3\) and edges \(e_1 =
		v_1v_2\), \(e_2 = v_1v_3\). The degree of \(v_1\) is 2, while the degrees of
		\(v_2\) and \(v_3\) are each 1.
	\end{example}
	
	\begin{definition} \cite{ChartrandZhang}
		Two graphs \(G\) and \(H\) are disjoint if \(V(G)\) and \(V(H)\) are disjoint. Their union \(G + H\)
		is the graph with vertex set \(V(G) \cup V(H)\) and edge set \(E(G) \cup E(H)\).
	\end{definition}
	
	\begin{definition} \cite{ChartrandZhang}
		An subgraph induced by a set \(A\) of vertices of a graph \(G\), denoted by \(G[A]\), is the graph with vertex set \(A\) and edge set consisting of edges in \(G\) with both end-vertices are in \(A\).
	\end{definition}
	
	\begin{definition} \cite{ChartrandZhang}
		A graph \(H\) is a subdivision of \(G\) if \(H\) can be obtained by inserting vertices of
		degree 2 into edges of \(G\).
	\end{definition}
	
	\begin{definition}\cite{Knauer2019algebraic}
		A (strong) graph homomorphism is a map \(f: V(G) \rightarrow V(H)\) such that for
		any edge \( uv \in E(G) \), there exists an edge \( f(u)f(v) \in E(H) \).
	\end{definition}
	
	\begin{definition}\cite{Knauer2019algebraic}
		A weak graph homomorphism or an egamorphism between graphs \( G \) and \( H \) is a map \(f: V(G) \rightarrow V(H)\) such that if \( u v \in E(G) \), then either \(f(u) = f(v)\) or \( f(u)f(v) \in E(H) \).
	\end{definition}
	
	\begin{remark}
		Since our graphs in this paper are all simple, then a graph homomorphism is not equivalent to weak graph homomorphism. \cite{Knauer2019algebraic}
	\end{remark}
	
	\begin{definition}\cite{ChartrandZhang}
		A graph is connected if there exists a path between any pair of vertices.
	\end{definition}
	
	\begin{definition}\cite{Munkres}
		A \textit{topological space} is a set \( X \) equipped with a topology \(
		(\tau) \), which is a collection of subsets of \( X \) satisfying:
		\begin{enumerate}
			\item The union of any collection of sets in \( \tau \) is also
			in \( \tau \).
			\item The intersection of any finite number of sets in \( \tau \)
			is also in \( \tau \).
			\item Both the empty set and \( X \) itself are in \( \tau \).
		\end{enumerate}
	\end{definition}
	
	\begin{definition}\cite{Munkres}
		\label{Closed Sets}
		A subset \( C \subseteq X \) is called \textit{closed} in the topological
		space \( (X, \tau) \) if its complement \( X \setminus C \) is open.
	\end{definition}
	
	\begin{definition}\cite{Munkres}
		A collection \( \mathcal{B} \) of open sets in a topological space \( (X,
		\tau) \) is said to be a \textit{basis} for the topology if
		\begin{enumerate}
			\item every open set in \( \tau \)
			can be written as a union of elements from \( \mathcal{B} \), and
			\item for every \(B_1, B_2 \in \mathcal{B}\), the intersection
			\(B_1 \cap B_2\) can be express as a union of sets in \(\mathcal{B}\).
		\end{enumerate}
	\end{definition}
	
	\begin{definition}\cite{Munkres}
		A collection \( \mathcal{S} \) of subsets of \( X \) is called a
		\textit{sub-basis} for the topology on \( X \) if the topology \( \tau \)
		can be
		generated by taking all possible unions of finite intersections of
		elements from
		\( \mathcal{S} \).
	\end{definition}
	
	\begin{definition} \cite{Munkres}
		A function \(f:X \rightarrow Y\) between topological spaces \(X\) and \(Y\) is said to be continuous if \(f^{-1}(U)\) is open in \(X\) for every open set \(U\) in \(Y\).
	\end{definition}
	
	\begin{remark}
		\label{ContinuityviaSub-basis}
		It suffices to show that the pre-image of every subbase in topology of \( Y \) is open in \( X \) to demonstrate that a function \( f: X \rightarrow
		Y \) between topological spaces is continuous \cite{Munkres}.
	\end{remark}
	
	\begin{definition}[Homeomorphism]
		A continuous map that is bijective and has a continuous inverse is called
		a
		homeomorphism. Such maps establish topological equivalence between
		spaces, as
		detailed in \cite{Munkres}.
	\end{definition}
	
	\begin{definition}[Alexandroff Space]
		An Alexandroff space is a topological space where every point has a
		minimal
		neighborhood, or equivalently, has a unique minimal base. Equivalently, the
		intersection of every family of open sets is itself an open set.
		\cite{Arenas,JafarianAmiriJafarzadeh2013}
	\end{definition}

	\section{Star Topology on Bare Representations}
	\label{StarTopo:sec}
	The \textbf{Bare representation} \( B(G) \) of a graph \( G \) is a set which is a disjoint
	union of \( V(G) \) and \( E(G) \). We prescribe a topology on \( B(G) \)
	using the incidence relation on \( G \).
	
	\begin{definition}[Open Stars]
		The open star centered at a vertex \( v \in V(G) \), denoted by \( S_G(v)
		\), consists of \( v
		\) itself along with all edges incident to \( v \). Formally,
		\[ S_G(v) = \{v\} \cup \{e \in E \mid e \text{ is incident to } v\}. \]
	\end{definition}
	
	\begin{remark}
		Note that the collection \(\{S_G(v): v \in V(G)\}\) forms a sub-basis.
	\end{remark}
	
	\begin{definition}[Star Topology]
		The \textit{star topology} on \( B(G) \) is the topology generated by the
		collection \(\{S_G(v): v \in V(G)\}\).
	\end{definition}
	
	\begin{theorem}[Basic Property]
		\label{BasicProperty}
		We have the following properties of the star topology on \(B(G)\):
		\begin{enumerate}
			\item The collection
			\[
			\{S_G(v): v \in V(G)\} \cup \{\{e\}: e \in E(G)\}
			\]
			forms a basis for the star topology.
			\item For any subset \(A\) of \(B(G)\), \(A \cap E(G)\) is open.
			\item A subset \(A \subseteq B(G)\) is open if and only if
			\(S_G(v) \subseteq A\) for every vertex \(v \in A\).
		\end{enumerate}
	\end{theorem}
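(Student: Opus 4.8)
The plan is to work directly from the definition that a basis for a sub-basically generated topology consists of all finite intersections of sub-basis elements, after which every open set is a union of such basis elements. I would therefore begin with part (1), since parts (2) and (3) will follow once the basis is pinned down.

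For (1), the decisive step is to compute the intersection of two stars at distinct vertices $u \neq v$. Since $S_G(w)$ contains the vertex $w$ but no other vertex, no vertex survives the intersection, so the surviving points are exactly the edges incident to both $u$ and $v$. Because $G$ is simple, at most one such edge exists, giving
\[
S_G(u) \cap S_G(v) = \begin{cases} \{uv\} & \text{if } uv \in E(G), \\ \emptyset & \text{otherwise.}\end{cases}
\]
This produces precisely the edge singletons $\{e\}$. I would then observe that intersecting three or more distinct stars collapses to $\emptyset$, because an edge singleton $\{uv\}$ meets a third star $S_G(w)$ with $w \notin \{u,v\}$ in the empty set. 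Hence the set of finite intersections is exactly $\{S_G(v): v \in V(G)\} \cup \{\{e\}: e \in E(G)\}$ together with $\emptyset$ and $B(G)$, which is the asserted basis.

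Part (2) is then immediate: every edge singleton $\{e\}$ is a basis element by (1), so for arbitrary $A \subseteq B(G)$ the edge part decomposes as $A \cap E(G) = \bigcup_{e \in A \cap E(G)} \{e\}$, a union of open sets and hence open. For (3), the organizing fact is that $S_G(v)$ is the minimal open neighborhood of the vertex $v$. In the forward direction, if $A$ is open and $v \in A$ is a vertex, then $v$ lies in some basis element inside $A$; since edge singletons contain no vertices and $S_G(w) \ni v$ forces $w = v$, that basis element must be $S_G(v)$, so $S_G(v) \subseteq A$. For the converse, assuming $S_G(v) \subseteq A$ for every vertex $v \in A$, I would write
\[
A = \left(\bigcup_{v \in A \cap V(G)} S_G(v)\right) \cup \left(\bigcup_{e \in A \cap E(G)} \{e\}\right)
\]
and check both inclusions: the hypothesis keeps every star inside $A$, while each vertex and edge of $A$ is captured by its own star or singleton, exhibiting $A$ as a union of basis elements.

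The main obstacle is the intersection bookkeeping in (1): one must invoke simplicity of $G$ to ensure two vertices share at most one edge, so that $S_G(u) \cap S_G(v)$ is a singleton rather than a larger set, and one must confirm that higher-order intersections degenerate rather than generate new basis elements. Once (1) is secured, (2) and (3) follow quickly, with (3) amounting to the statement that the stars are the minimal neighborhoods — the same property that will yield the Alexandroff compatibility promised in the introduction.
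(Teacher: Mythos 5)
Your proposal is correct and takes essentially the same approach as the paper: the paper's entire proof consists of the single observation that \(\{uv\} = S_G(u) \cap S_G(v)\) is open when \(u\) and \(v\) are adjacent, declaring all three parts immediate from this, and your argument is precisely that observation with the supporting details (degeneration of higher-order intersections, minimality of the stars as neighborhoods) written out in full.
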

	\begin{proof}
		All the statement in Theorem \ref{BasicProperty} follow immediately from
		fact that the singleton set \(\{uv\}\) is open in \( B(G) \) if \(u\) and \(v\)
		are adjacent because it is the intersection of the open stars \( S_G(u)\) and \(
		S_G(v) \).
	\end{proof}
	
	\begin{remark}
		It is easy to see that in any graph \(G\), the representation \(B(H)\) of
		the subgraph \(H\) of \(G\) is subset of \(B(G)\).
	\end{remark}

	\begin{figure}
		\centering
		\begin{tikzpicture}[scale=0.8]
			\foreach \i in {1,...,5} {
				\node[fill=blue] (v\i) at (\i*72+54:2cm) [circle, draw] {};
			};
			\foreach \i in {1,...,4} {
				\node[fill=blue] (u\i) at (\i*72+54:4cm) [circle, draw] {};
			};
			\node[fill=red] (u5) at (5*72+54:4cm) [circle, draw] {};
			% Inner pentagon edges
			\draw[blue] (v1) -- (v3);
			\draw[blue] (v2) -- (v4);
			\draw[blue] (v3) -- (v5);
			\draw[blue] (v4) -- (v1);
			\draw[blue] (v5) -- (v2);
			
			% Outer pentagon edges
			\draw[blue] (u1) -- (u2);
			\draw[blue] (u2) -- (u3);
			\draw[blue] (u3) -- (u4);
			\draw[red] (u4) -- (u5);
			\draw[red] (u5) -- (u1);
			
			% Inner-outer connections
			\draw[blue] (v1) -- (u1);
			\draw[blue] (v2) -- (u2);
			\draw[blue] (v3) -- (u3);
			\draw[blue] (v4) -- (u4);
			\draw[red] (v5) -- (u5);
		\end{tikzpicture}
		\caption{The Petersen graph with a subgraph (blue) and an open star(red).}
	\end{figure}
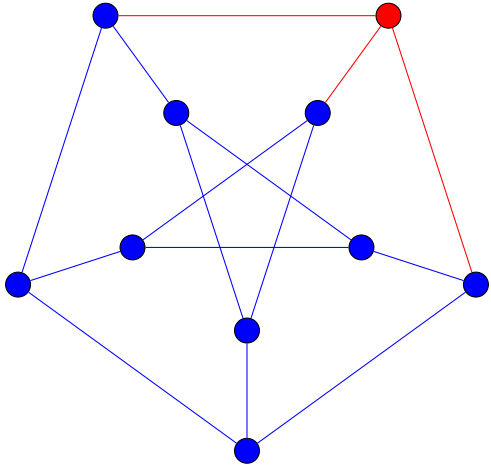
	
	\begin{theorem}[Characterization of Open Sets]
		\label{Characterization of Open Sets}
		A set \(A \subseteq B(G)\) is open if and only if there is a subgraph \(H\) of
		\(G\) such that \(B(H) = B(G) \setminus A\).
	\end{theorem}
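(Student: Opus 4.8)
The plan is to reduce the statement to the characterization of open sets in part (3) of Theorem \ref{BasicProperty} and then translate it, by taking complements, into the defining condition of a subgraph. Throughout, write $C = B(G) \setminus A$ and split it as $C = (C \cap V(G)) \cup (C \cap E(G))$. The heart of the argument is the observation that the complement of an open set is precisely a subset of $B(G)$ that is \emph{endpoint-closed}: whenever an edge belongs to $C$, both of its incident vertices also belong to $C$. This is exactly the condition required for $C$ to equal the bare representation $B(H)$ of a subgraph $H$, since a subgraph is determined by a vertex set together with an edge set all of whose endpoints lie in that vertex set.

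For the forward direction, I would assume $A$ is open and define $H$ by $V(H) = C \cap V(G)$ and $E(H) = C \cap E(G)$. The point to verify is that $H$ is a legitimate subgraph of $G$, i.e., that every $e \in E(H)$ has both endpoints in $V(H)$. Suppose $e = uv \in C$ but $u \notin C$; then $u \in A$, and since $A$ is open, part (3) of Theorem \ref{BasicProperty} gives $S_G(u) \subseteq A$, forcing $e \in A$ and contradicting $e \in C$. Hence both endpoints lie in $C$, so $H$ is a subgraph, and by construction $B(H) = C = B(G) \setminus A$.

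For the converse, I would assume $C = B(H)$ for some subgraph $H$ and show $A$ is open via the criterion in part (3). Let $v \in A$ be any vertex; I must show $S_G(v) \subseteq A$. If some edge $e$ incident to $v$ were in $C = B(H)$, then $e \in E(H)$, so by the subgraph condition both endpoints of $e$ lie in $V(H) \subseteq C$; in particular $v \in C$, contradicting $v \in A$. Therefore no edge incident to $v$ lies in $C$, so $S_G(v) \subseteq A$, and $A$ is open.

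The argument is essentially a dualization, so the only real work is bookkeeping: keeping the vertex/edge partition of $B(G)$ straight and matching the open-set criterion of part (3) to the subgraph axiom exactly. The subtlety I would flag is that $V(H)$ may contain vertices that are isolated in $H$ (vertices of $C$ none of whose incident $G$-edges lie in $C$); this is harmless, since subgraphs are permitted to have isolated vertices, and it is precisely why the characterization must use arbitrary subgraphs rather than induced ones.
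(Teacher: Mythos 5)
Your proof is correct and follows essentially the same route as the paper's: both directions hinge on the star criterion of part (3) of Theorem \ref{BasicProperty}, with the forward direction defining $H$ from the vertices and edges outside $A$ and checking the endpoint condition by openness, and the converse showing no edge incident to a vertex of $A$ can lie in $B(H)$. Your closing remark about isolated vertices in $H$ is a nice clarification the paper leaves implicit, but the argument itself is the same.
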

	\begin{proof}
		If \(A\) is open, then for every vertex \(v \in A\), the star \(S_G(v)\) is
		contained in \(A\).
		Consider \(U\) as the set of all vertices not in \(A\) and \(F\) as the set of
		all edges not in
		\(A\). Since any edge with one endpoint in \(A\) must entirely lie within \(A\)
		(due to the openness
		of \(A\)), it follows that every edge in \(F\) has both endpoints in \(U\).
		Therefore, we can define
		\(H\) as the subgraph of \(G\) with vertex set \(U\) and edge set \(F\).
		
		Conversely, if there exists a subgraph \(H\) such that \(B(H) = B(G) \setminus
		A\), then \(A\)
		consists of all vertices not in \(H\) and all edges not in \(H\). If \(v \in A\)
		is a vertex, then the edges in \(S_G(v)\) are not in \(B(H)\) as the edges in
		\(B(H)\) must have end-vertices in \(B(H)\). Hence, \(S_G(v)\) is contained in
		\(A\) for all vertices \(v \in A\). By Remark \ref{BasicProperty}, \(A\) is open.
	\end{proof}
	
	\begin{corollary}[Characterization of closed sets as representation of subgraphs]
		The closed sets of \(B(G)\) are precisely those representations of
		subgraphs of \(G\).
	\end{corollary}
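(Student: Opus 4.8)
The plan is to obtain this statement as an immediate dual of Theorem~\ref{Characterization of Open Sets} via the definition of closed sets. First I would fix an arbitrary subset \(C \subseteq B(G)\) and recall from Definition~\ref{Closed Sets} that \(C\) is closed in the star topology precisely when its complement \(A := B(G) \setminus C\) is open.

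Next I would apply Theorem~\ref{Characterization of Open Sets} to the set \(A\). That theorem asserts that \(A\) is open if and only if there is a subgraph \(H\) of \(G\) with \(B(H) = B(G) \setminus A\). Since \(C \subseteq B(G)\) gives the double-complement identity \(B(G) \setminus A = B(G) \setminus (B(G) \setminus C) = C\), the condition becomes simply \(B(H) = C\). Chaining the two equivalences yields: \(C\) is closed if and only if \(C = B(H)\) for some subgraph \(H\) of \(G\), which is exactly the asserted characterization of the closed sets as the representations of subgraphs.

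I do not anticipate a genuine obstacle here, since the substantive content has already been carried by the openness characterization; the corollary is essentially a bookkeeping step. The only points requiring a moment of care are the double-complement identity above (valid precisely because \(C \subseteq B(G)\)) and the implicit observation that the assignment \(H \mapsto B(H)\) is well defined on subgraphs, so that the phrase \emph{precisely those representations of subgraphs} accurately and non-redundantly describes the closed sets.
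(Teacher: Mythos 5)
Your proposal is correct and follows the same route as the paper: the paper's proof likewise just invokes Definition~\ref{Closed Sets} to pass to complements and appeals to Theorem~\ref{Characterization of Open Sets}. You have merely written out the double-complement bookkeeping that the paper leaves implicit, which is a faithful (and slightly more careful) rendering of the same argument.
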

	\begin{proof}
		From Definition \ref{Closed Sets}, this is equivalent to Theorem
		\ref{Characterization of Open Sets}.
	\end{proof}
	
	\begin{corollary}[Isolated Vertex Properties]
		\label{IsolatedVertexProperties}
		
		For any vertex \( v \in V(G) \):
		\begin{enumerate}
			\item The singleton set \(\{v\}\) is closed in the star topology.
			\item The singleton set \(\{v\}\) is open in the star topology if
			and only if \( v \) is an isolated vertex.
		\end{enumerate}
	\end{corollary}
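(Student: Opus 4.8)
The plan is to derive both statements directly from the characterizations already in hand, so that no fresh topological argument is required; each part is a one-line consequence of an earlier result once the trivial subgraph and the open-star formula are brought to bear.

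For part (1), I would exhibit $\{v\}$ as the bare representation of a subgraph and then invoke the Corollary characterizing closed sets as representations of subgraphs. Concretely, let $H$ be the subgraph of $G$ with vertex set $\{v\}$ and empty edge set; since our graphs are simple, no edge of $G$ has both endpoints equal to $v$, so this is exactly the induced subgraph $G[\{v\}]$ and hence a genuine subgraph. Then $B(H) = \{v\} \cup \emptyset = \{v\}$, and because every representation $B(H)$ of a subgraph is closed, $\{v\}$ is closed. (Equivalently, one could verify via Theorem \ref{Characterization of Open Sets} that the complement $B(G)\setminus\{v\}$ is open: it contains all vertices other than $v$ together with every edge, so it satisfies the openness criterion of Theorem \ref{BasicProperty}(3) at each of its vertices.)

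For part (2), I would apply the basis-level criterion of Theorem \ref{BasicProperty}(3), namely that a set is open precisely when it contains $S_G(u)$ for each of its vertices $u$. Taking the set to be $\{v\}$, openness of $\{v\}$ is equivalent to the single inclusion $S_G(v) \subseteq \{v\}$. By the definition of the open star, $S_G(v) = \{v\} \cup \{e \in E(G) : e \text{ is incident to } v\}$, so this inclusion holds if and only if there are no edges incident to $v$, that is, $\deg(v) = 0$. Since this is exactly the definition of $v$ being isolated, both implications follow simultaneously.

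I do not anticipate a substantive obstacle, as the work has been front-loaded into the characterization theorems. The only point demanding mild care is selecting the correct tool for part (2): the closed-set corollary would only re-prove that $\{v\}$ is closed, whereas it is the basis characterization in Theorem \ref{BasicProperty}(3) that detects \emph{openness} and makes the collapse of $S_G(v)$ to $\{v\}$ precisely equivalent to the vanishing of the degree of $v$.
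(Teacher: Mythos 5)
Your proposal is correct and follows essentially the same route as the paper: part (1) is obtained by recognizing $\{v\}$ as the bare representation of a one-vertex subgraph and invoking the closed-set characterization, and part (2) is settled by the criterion of Theorem \ref{BasicProperty}(3), which reduces openness of $\{v\}$ to the inclusion $S_G(v) \subseteq \{v\}$ and hence to $\deg(v)=0$. The only cosmetic difference is that the paper handles the forward direction of (2) by noting directly that $\{v\} = S_G(v)$ is open as a sub-basic set, whereas you run both directions through the same equivalence; the content is identical.
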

	
	\begin{proof}
		A single vertex is subgraph of \(G\). Hence, \(\{v\}\) is closed by
		Theorem \ref{Characterization of Open Sets}. Furthermore, if \(v\) is an isolated
		vertex, then \(\{v\} =  S_G(v)\) is open by definition. If \(\{v\}\) is open,
		\(S_g(v) \subseteq \{v\} \subseteq S_G(v)\) by Theorem \ref{BasicProperty}. Thus,
		\(v\) is not incident to any edges of \(G\) and so, \(v\) is an isolated `vertex.
	\end{proof}
	
	\begin{remark}
		It follows immediately from the definition of the star topology that it induces
		an Alexandroff topology on graphs. In \cite{JafarianAmiriJafarzadeh2013}, an
		Alexandroff topology is defined as a topology on the vertex set of a graph such
		that basic open sets are the open neighborhood of vertices.
		
		Furthermore, in \cite{NoglySchladt1996}, it is stated that for a given graph \( G
		\) and a set \( V \subseteq V(G) \), one can define an induced subgraph \( G_V
		\). The paper posed a question of whether such a subset \( V \) can be assigned
		a topology by declaring certain subsets of \( V \) as ``open'' in a meaningful way.
		In our case, the star topology provides this structure by associating openness
		with the incidence relation and open stars.
	\end{remark}
	
	For every graph \(G\), the star topology on \(B(G)\) satisfies the $T_0$ separation axiom. Indeed,
	the open stars separate the vertices; the singleton set of an edge is an open set separating the
	edge from the rest of \(B(G)\). However, we cannot separate an edge and a vertex incident to it by
	two disjoint open sets. Hence, the star topology is not always Hausdorff. A simple example of this is the graph \(K_2\).
	
	\subsection{Connectedness}
	
	Recall that a graph \( G \) is connected if every pair of vertices are endpoints of some path. Contrasting this with connectivity in topology, a space is connected if it
	cannot be expressed as a disjoint union of open sets. Although this term has been used in two different fields, it should be clear that when we say that a graph is connected, we refer to
	connectedness in graph theory. Similarly, the concept of connectedness for \( B(G) \) is defined within the context of topology.
	
	\begin{theorem}
		A graph \(G\) is connected if and only if \(B(G)\) is connected.
	\end{theorem}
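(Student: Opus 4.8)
The plan is to use the characterization of open sets from Theorem \ref{BasicProperty}(3): a set is open exactly when it contains the full open star of each of its vertices. Both directions of the equivalence will be handled through separations $B(G) = A \sqcup B$ into disjoint nonempty open sets.

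For the forward direction, suppose $G$ is connected but, toward a contradiction, $B(G) = A \sqcup B$ is such a separation. The key fact I would establish first is a local clumping lemma: for any edge $e = uv$, the three points $u$, $v$, $e$ always lie in the same part. Indeed, if $e$ lies in $A$, then neither endpoint can lie in $B$, since a vertex in the open set $B$ drags its whole star---including $e$---into $B$ by Theorem \ref{BasicProperty}(3), contradicting disjointness; hence both endpoints lie in $A$, and conversely an endpoint in $A$ forces $e \in A$ by openness. Next I would propagate this along paths: given any two vertices, a path $w_0, w_1, \dots, w_k$ between them has each consecutive triple $w_{i-1}, e_i, w_i$ confined to a single part, so all of its vertices lie in the same part. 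Since $G$ is connected, all of $V(G)$ therefore lies in one part, say $A$; openness then forces every incident edge into $A$, and since every edge is incident to a vertex, $E(G) \subseteq A$ as well. Thus $B = \emptyset$, the desired contradiction.

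For the reverse direction I would argue the contrapositive. If $G$ is disconnected, write $V(G) = V_1 \sqcup V_2$ with $V_1, V_2$ nonempty and no edge joining them, and set $G_i = G[V_i]$. Then $B(G) = B(G_1) \sqcup B(G_2)$. I would check that each $B(G_i)$ is open directly from Theorem \ref{BasicProperty}(3): any vertex of $G_i$ has all of its incident edges inside $G_i$, since there are no crossing edges, so its star lies in $B(G_i)$. Alternatively, each $B(G_i)$ is the representation of a subgraph, hence closed by the corollary on closed sets, and is the complement of the other, hence open; either way we obtain a separation of $B(G)$, so $B(G)$ is disconnected.

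The reverse direction is essentially immediate once the correspondence between subgraphs and closed sets is in hand, so I expect the only real work to be in the forward direction---specifically, isolating and proving the clumping lemma that a vertex and each of its incident edges cannot be split by a separation. Care is needed only with degenerate cases such as a single isolated vertex or the empty graph, which I would dispatch separately or exclude by the standing convention that a connected graph is nonempty.
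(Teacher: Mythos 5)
Your proposal is correct and takes essentially the same route as the paper: the forward direction is a separation-by-open-sets contradiction propagated along a path via Theorem \ref{BasicProperty}(3), and the reverse direction is the component partition $B(G) = B(G_1) \sqcup B(G_2)$ with each piece open. In fact your version is slightly more careful than the paper's, which tacitly assumes both parts of the separation contain a vertex; your closing step forcing $E(G) \subseteq A$ and hence $B = \emptyset$ covers the case of a part consisting only of edges.
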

	\begin{proof}
		Assume that \( G \) is a connected graph. We aim to show that its Bare representation
		\(
		B(G) \) under
		the star topology is also connected.
		
		For contradiction, suppose that \( B(G) \) is disconnected. Then, there exist two
		non-empty disjoint
		open sets \( U \) and \( V \) in \( B(G) \) such that:
		\[
		B(G) = U \cup V
		\]
		Choose a vertex \( u \in U \) and another vertex \( v \in V \). Since \( G \) is
		connected, there
		exists a finite path \( P: u = u_1u_2 \cdots u_n = v \).
		
		Consider each edge \( e = u_iu_{i+1} \) in the path. If one endpoint of an edge
		is
		in \( U \), then
		by the definition of open sets in the star topology, the entire neighborhood star
		around that vertex
		(including the edge itself) must lie within \( U \). Similarly, if a vertex is in
		\(
		V \), its
		neighborhood star lies entirely in \( V \).
		
		However, along the path from \( u \) to \( v \), there must be an edge where one
		endpoint
		transitions from \( U \) to \( V \). This transition would imply that the edge
		belongs to both \( U
		\) and \( V \), contradicting their disjointness. Hence, \( B(G) \) is connected.
		
		To establish the converse, we assume that \( B(G) \) is connected under the star
		topology and demonstrate that \( G \) is connected. Assume, for contradiction, that \( G \) is not connected. Then, it can be partitioned into two vertex-disjoint non-empty
		subgraphs \( H_1 \) and \( H_2 \). Also, \( H_1 \) and \( H_2 \) are edge-disjoint.
		
		Since the set \( B(H_i) \) consists of all vertices and edges of \( H_i \), it follows that \( B(H_1) \) and \( B(H_2) \) are non-empty disjoint sets whose union is $B(G)$. In the star topology, any neighborhood star around a vertex	within \( H_1 \) includes only vertices and edges from \( H_1 \), meaning \( B(H_1) \) is an open set. The same applies to \( B(H_2) \). Therefore, we have:
		\[
		B(G) = B(H_1) \cup B(H_2)
		\]
		where both \( B(H_1) \) and \( B(H_2) \) are open, non-empty, and disjoint. This contradicts the assumption that \( B(G) \) is connected. Hence, our initial assumption that \( G \) is disconnected must be false, which shows that \( G \) is connected.
	\end{proof}
	
	\section{Continuous Maps}
	\label{ContiMap:sec}

	Having established the star topology on bare representations, we now explore
	the behavior of continuous functions in this topological setting. Our primary goal in this section is to investigate how continuous maps interact with the graph structure and topology and list some examples of continuous maps. 
	
	\begin{example}
		\label{Homomorphismsarecontinuous}
		We begin by examining a graph homomorphism \( f: G \to H \). Consider the map \(\Phi_f: B(G) \to B(H)\) defined by:
		\begin{itemize}
			\item For each vertex \( v \in V(G) \), \(\Phi_f(v) = f(v)\).
			\item For each edge \( e = uv \in E(G) \), \(\Phi_f(e) = f(u)f(v)\) in \(
			B(H) \).
		\end{itemize}
		It should follow immediately that \(\Phi^{-1}(S_H(\Phi_f(x))) = S_G(x)\) for every vertex \(x\) of \(G\). Hence, graph homomorphisms induce a continuous map between the graphs.
	\end{example}
	
	However, this is not always the case that every continuous map is a function induced by a graph homomorphism. To distinguish these types of mappings, we introduce the following definitions:
	
	\begin{definition}
		A \textbf{vertex map} between bare representations of graphs is a map that sends vertices to vertices. Similarly, an \textbf{edge map} is a function that sends edges to edges. An \textbf{incidence map} is a continuous function that is both a vertex map and an edge map.
	\end{definition}
	
	Since only non-adjacent vertices are the only possible vertices mapped to a single vertex under an incidence map, the following should be immediate from the definition.
	
	\begin{remark}
		\label{IncidenceMapandIndependentSets}
		Let \(f:B(G) \rightarrow B(H)\) be an incidence map. For every \(v \in V(H)\), the \(f^{-1}(v)\) is an independent set. Consequently, every graph homomorphism induces an incidence map, and vice versa.
	\end{remark}
	
	Now consider a weaker notion of graph homomorphism.
	
	\begin{theorem}
		\label{WeakHomomorphismareContinuousVertexMap}
		Let \(f:V(G) \rightarrow V(H)\) be a weak graph homomorphism. Then \(f\) induces a continuous map from \(B(G)\) to \(B(H)\).
	\end{theorem}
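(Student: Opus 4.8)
The plan is to first write down the correct induced map and then verify continuity via the sub-basis criterion. Unlike the strong-homomorphism case of Example \ref{Homomorphismsarecontinuous}, the induced map here cannot always send edges to edges: when $uv \in E(G)$ satisfies $f(u) = f(v)$, the putative image edge $f(u)f(v)$ degenerates to a single vertex. I would therefore define $\Phi_f : B(G) \to B(H)$ by setting $\Phi_f(v) = f(v)$ on vertices, and for an edge $e = uv$ putting $\Phi_f(e) = f(u)f(v) \in E(H)$ when $f(u) \neq f(v)$ and $\Phi_f(e) = f(u) \in V(H)$ when $f(u) = f(v)$. The first thing to check is well-definedness: the weak-homomorphism hypothesis guarantees that for every edge $uv$ exactly one of these two alternatives occurs, so $\Phi_f(e)$ always lands in $B(H)$.

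With the map in hand, I would invoke Remark \ref{ContinuityviaSub-basis}, so that it suffices to show $\Phi_f^{-1}(S_H(w))$ is open for each vertex $w \in V(H)$, since the open stars form the sub-basis of the star topology on $B(H)$. To test openness of this preimage I would use the criterion of Theorem \ref{BasicProperty}(3), namely that a set $A \subseteq B(G)$ is open precisely when $S_G(v) \subseteq A$ for every vertex $v \in A$. The whole argument thus reduces to the following claim: if $v$ is a vertex with $f(v) = w$ (the only way a vertex of $G$ can lie in $\Phi_f^{-1}(S_H(w))$, since $w$ is the unique vertex of $S_H(w)$), then every edge incident to $v$ also maps into $S_H(w)$.

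The verification of this claim is the heart of the proof, and it is a short case analysis on an edge $e = vu$ incident to $v$. If $f(u) = f(v) = w$, then $\Phi_f(e) = w \in S_H(w)$; if $f(u) \neq f(v)$, then $\Phi_f(e) = f(v)f(u) = wf(u)$ is an edge of $H$ incident to $w$, hence again lies in $S_H(w)$. In either case $e \in \Phi_f^{-1}(S_H(w))$, so $S_G(v) \subseteq \Phi_f^{-1}(S_H(w))$, and the preimage is open by Theorem \ref{BasicProperty}(3).

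I do not anticipate a genuine obstacle; the only point requiring care is the bookkeeping around degenerate edges. The reason a weak homomorphism nonetheless induces a continuous map, whereas it need not induce a graph homomorphism in the strong sense, is exactly that the star topology tolerates edges being sent to vertices: collapsing an edge to the vertex $w$ keeps its image inside $S_H(w)$, which is all that continuity demands. This is the mechanism flagged in the introduction, and the case $f(u) = f(v)$ is precisely where it is used.
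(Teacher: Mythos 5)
Your proposal is correct and follows essentially the same route as the paper's proof: the same induced map $\Phi_f$ (collapsing edges with $f(u)=f(v)$ to the vertex $f(u)$), the same reduction to open stars via Remark \ref{ContinuityviaSub-basis}, and the same case analysis on edges incident to a vertex in the preimage, using Theorem \ref{BasicProperty} to conclude openness. The only cosmetic difference is that you cite part (3) of Theorem \ref{BasicProperty} explicitly, whereas the paper phrases the conclusion as the preimage being a union of stars $S_G(x)$ with $\Phi_f(x)=v$; these are the same argument.
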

	\begin{proof}
		We define the map \(\Phi_f:B(G) \rightarrow B(H)\) by setting \(\Phi_f(u) = f(u)\) for every vertices \(u\) of \(G\) and every edge \(xy\) is mapped to \(f(x)f(y)\) if \(f(x) \neq f(y)\) and \(f(x)\), otherwise.
		
		Let \(S_H(v)\) be a given star in \(H\). We apply Theorem \ref{BasicProperty} to show that \(f^{-1}(S_H(v))\) is open and this is sufficient to prove the theorem.
		
		Let \(x \in \Phi_f^{-1}(S_H(v))\) be a vertex. Consider an edge \(xy\) incident to \(x\). If \(f(x) = f(y)\), then \(f(xy) = f(x)\) by definition. If \(f(x) \neq f(y)\) then \(f(x)\) is adjacent to \(f(y)\) in \(H\) and so, xy is mapped to \(f(x)f(y)\). Either way, \(xy \in \Phi_f^{-1}(S_H(v))\). Hence, \(\Phi_f^{-1}(S_H(v))\) is the union of the stars \(S_G(x)\) such that \(\Phi_f(x) = v\). Therefore, \(\Phi_f^{-1}(S_H(v))\) is open and so, \(\Phi_f\) is continuous.
	\end{proof}
	
	\begin{example}
		While every homomorphism induces a continuous map, the converse is not generally true. For instance, the complete graph \( K_2 \) with vertices \(\{v_1,v_2\}\) and edge \( e =	v_1 v_2 \) and the bare representation of \( B(K_2) \) consists of two vertices and one edge. 
		
		Any continuous map from \( B(K_2) \) to an isolated vertex \( N_1 \) must send both the vertices and the edge to the single vertex in \( B(N_1) \)--an edge-contraction. However, this does not preserve adjacency since the edge in \( K_2 \) would be mapped to the vertex in \( S_1 \).
	\end{example}
	
	Thus, edge contraction provides insight into what occurs during continuous mappings. To generalize this concept, we formally introduced the following.
	
	\begin{definition}[Vertex-Identification]
		\label{Vertex-Identification}
		Let \(G\) be a finite simple graph and let \(u\) and \(v\) be distinct
		vertices
		of \(G\). A \textit{vertex-identification} of \(u\) and \(v\) into a new
		vertex
		\(w\) yields a graph \(G/\{u,v\}\) defined as follows:
		\begin{itemize}
			\item \textbf{Vertex set:}
			\[
			V(G/\{u,v\}) = (V(G) \setminus \{u,v\}) \cup \{w\}.
			\]
			\item \textbf{Edge set:}
			The edge set \(E(G/\{u,v\})\) is the union of:
			\begin{enumerate}
				\item Edges not incident to \(u\) or \(v\):
				\[
				\{e \in E(G) : e \text{ is not incident to } u \text{ or
				} v\}.
				\]
				\item Edges incident to \(u\) or \(v\), redirected to
				\(w\):
				\[
				\{wx \mid x \in V(G) \setminus \{u,v\} \text{ and } (ux
				\in E(G) \text{
					or } vx \in E(G))\}.
				\]
			\end{enumerate}
		\end{itemize}
		If \(u\) and \(v\) are adjacent (i.e., \(uv \in E(G)\)), this process is
		specifically called an \textit{edge contraction}, denoted \(G/e\) or \(G/uv\) (see for example, \cite{ChartrandZhang}).
		
		The vertex-identification map \(\phi_{u,v}: B(G) \rightarrow B(G/\{u,v\})\) is
		defined by:
		\begin{enumerate}
			\item Vertices: All vertices in \(V(G) \setminus \{u,v\}\) are
			mapped to
			themselves. Vertices \(u\) and \(v\) are mapped to \(w\).
			\item Edges:
			\begin{itemize}
				\item Edges \(e \in E(G)\) not incident to \(u\) or \(v\)
				are mapped to
				themselves.
				\item Edges \(ux\) or \(vx\) in \(G\) (where \(x \neq u,
				v\)) are mapped
				to \(wx\) in \(G/\{u,v\}\).
				\item If \(uv\) is an edge (in the case of edge
				contraction), it is
				removed from \(E(G)\) and not included in
				\(E(G/\{u,v\})\). Instead, \(uv\) is
				"collapsed" into \(w\).
			\end{itemize}
		\end{enumerate}
		In the case of edge contraction (\(u\) and \(v\) adjacent), the map \(\phi_{u,v}\) is called an \textit{edge contraction map}. A \textit{contraction map} between bare representations is either the identity or a finite sequence of edge contraction maps 
	\end{definition}
	
	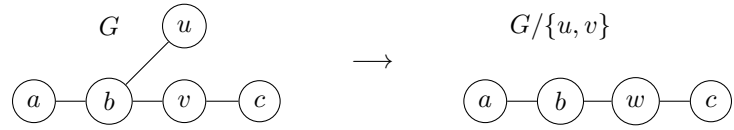
\begin{figure}
		\centering
		\begin{tikzpicture}
			% Define nodes
			\node (v1) at (0,0) [circle, draw] {\(a\)};
			\node (v2) at (1,0) [circle, draw] {\(b\)};
			\node (v3) at (2,0) [circle, draw] {\(v\)};
			\node (v4) at (3,0) [circle, draw] {\(c\)};
			\node (w) at (2,1) [circle, draw] {\(u\)};
			
			% Draw edges
			\draw (v1) -- (v2);
			\draw (v2) -- (v3);
			\draw (v3) -- (v4);
			\draw (v2) -- (w);
			
			% Add labels inside nodes
			\node at (1,1) {\( G \)};
			
			\node at (4.5,0.5) {\( \longrightarrow \)};
			
			\node (u1) at (6,0) [circle, draw] {\(a\)};
			\node (u2) at (7,0) [circle, draw] {\(b\)};
			\node (u3) at (8,0) [circle, draw] {\(w\)};
			\node (u4) at (9,0) [circle, draw] {\(c\)};
			
			\draw (u1) -- (u2);
			\draw (u2) -- (u3);
			\draw (u3) -- (u4);
			
			\node at (7,1) {\( G/\{u,v\} \)};
		\end{tikzpicture}
		\caption{The graph \(G/\{u,v\}\) obtained by identifying the vertices \(u\) and \(v\) into \(w\) while we map the edge \(ub\) and \(vb\) to \(wb\).}
	\end{figure}
	
	\begin{theorem}
		\label{VertexIdentificationisContinuous}
		The vertex-identification map between graphs induces a surjective and continuous map on the corresponding bare representation.
	\end{theorem}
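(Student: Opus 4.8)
The plan is to verify the two assertions separately: surjectivity by a direct check against the construction of $G/\{u,v\}$, and continuity via the sub-basis criterion of Remark \ref{ContinuityviaSub-basis}.

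For surjectivity, I would simply run through the description of $B(G/\{u,v\})$ in Definition \ref{Vertex-Identification}. Every vertex of $G/\{u,v\}$ is either some $z \in V(G)\setminus\{u,v\}$, which is the image of itself, or the identified vertex $w$, which is the image of $u$ (and of $v$). Likewise, every edge of $G/\{u,v\}$ is either an edge $e$ not incident to $u$ or $v$, hit by itself, or an edge $wx$, hit by whichever of $ux,vx$ was an edge of $G$. Hence $\phi_{u,v}$ is onto.

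For continuity, since the open stars $\{S_{G/\{u,v\}}(z)\}$ form a sub-basis of $B(G/\{u,v\})$, Remark \ref{ContinuityviaSub-basis} reduces the problem to showing that $\phi_{u,v}^{-1}(S_{G/\{u,v\}}(z))$ is open for every vertex $z$, and by Theorem \ref{BasicProperty} it suffices to exhibit each such preimage as a union of open stars of $B(G)$. I would split into two cases according to whether $z$ is the new vertex $w$. When $z \neq w$, the only vertex mapping to $z$ is $z$ itself, and the edges mapping into $S_{G/\{u,v\}}(z)$ are precisely the edges of $G$ incident to $z$ (an edge $zx$ with $x\neq u,v$ maps to itself, while an edge $zu$ or $zv$ maps to $zw$); this gives $\phi_{u,v}^{-1}(S_{G/\{u,v\}}(z)) = S_G(z)$. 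When $z = w$, the vertices mapping to $w$ are exactly $u$ and $v$, and the edges mapping into $S_{G/\{u,v\}}(w)$ are all edges of $G$ incident to $u$ or $v$, together with the edge $uv$ itself in the edge-contraction case (which collapses onto the vertex $w$); this gives $\phi_{u,v}^{-1}(S_{G/\{u,v\}}(w)) = S_G(u)\cup S_G(v)$. Both preimages are unions of open stars, hence open, and continuity follows.

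The main obstacle is the bookkeeping in the case $z = w$: one must confirm that nothing outside $S_G(u)\cup S_G(v)$ maps into the star of $w$, and in particular account correctly for two phenomena specific to simple graphs—distinct edges $ux$ and $vx$ collapsing to a single edge $wx$, and the contracted edge $uv$ mapping to the vertex $w$ rather than to an edge. Verifying that the vertex $w$ appears in no star other than its own is what guarantees that $uv$ contributes only to the preimage of $S_{G/\{u,v\}}(w)$, which closes the argument.
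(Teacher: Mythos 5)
Your proposal is correct and follows essentially the same route as the paper's proof: reduce continuity to the sub-basis of open stars, then compute $\phi_{u,v}^{-1}(S_{G/\{u,v\}}(z)) = S_G(z)$ for $z \neq w$ and $\phi_{u,v}^{-1}(S_{G/\{u,v\}}(w)) = S_G(u) \cup S_G(v)$. You are in fact slightly more thorough than the paper, which omits the explicit surjectivity check and does not separately account for the contracted edge $uv$ landing on the vertex $w$.
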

	\begin{proof}
		Let \(\phi_{u,v}:B(G) \rightarrow B(G / \{u,v\})\) be a vertex-identification of the
		vertices \(u\) and \(v\) to a vertex \(w\) of a graph \(G\). Let \(O\) be open in
		\(B(G/\{u,v\})\). In light of Remark \ref{ContinuityviaSub-basis}, we may assume
		that \(O=S_v(x)\) for some vertex \(x \in V(G / \{u,v\})\).
		\begin{enumerate}
			\item Assume \(x \neq w\). Then \(\phi(x) =x\). Now, every edges incident to \(x\) in \(G\) are mapped to themselves in \(G/\{u,v\}\). So, \(S_G(x)\) is contained to \( \phi^{-1}(S_{G / \{u,v\}}(x))\).	Now, the only vertex mapped to \(x\) is \(x\) itself. Also, the edges incident to \(x\) in \(B(H)\) but not to \(w\) are edges \(G\) while the edges \(wx\) has a preimage of the form \(xv\) or \(xu\) in \(G\). Hence,
			\[\phi^{-1}(S_{G / \{u,v\}}(x)) = S_G(x).\]
			\item If \(x = w\), then every edges incident to \(w\) in
			\(G/\{u,v\}\) came from an edge incident to either \(u\) or \(v\) in \(G\). Thus,
			\[\phi^{-1}(S_{G / \{u,v\}}(w)) = S_G(u) \cup S_G(v).\]
		\end{enumerate}
		In either cases, \(\phi^{-1}(O)\) is open. Therefore, \(\phi\) is
		continuous.
	\end{proof}
	
	\begin{corollary}[Continuous surjection]
		Let \( G \) and \( H \) be graphs where \( H \) is obtained from \( G \) by a	sequence of vertex-identification. Then, there exists a surjective continuous map \(f: B(G) \to B(H)\).
	\end{corollary}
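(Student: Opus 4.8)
The plan is to reduce the statement to a single application of Theorem \ref{VertexIdentificationisContinuous} together with the elementary fact that compositions of continuous surjections are again continuous surjections. Since $H$ is obtained from $G$ by a sequence of vertex-identifications, I would first record this sequence explicitly as a chain of graphs
\[
G = G_0,\ G_1,\ \dots,\ G_k = H,
\]
where each $G_i$ is obtained from $G_{i-1}$ by identifying a single pair of distinct vertices. The induction is on $k$, the length of the chain, with $k = 0$ (so $H = G$ and $f$ the identity) as the trivial base case.

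For the inductive step, the key observation is that Definition \ref{Vertex-Identification} applies at every stage of the chain, since at each step we identify two \emph{distinct} vertices of the current graph $G_{i-1}$; hence each single step yields a vertex-identification map $\phi_i \colon B(G_{i-1}) \to B(G_i)$, which by Theorem \ref{VertexIdentificationisContinuous} is both surjective and continuous. Because the codomain of $\phi_i$ and the domain of $\phi_{i+1}$ coincide — both equal $B(G_i)$ — the composite
\[
f = \phi_k \circ \phi_{k-1} \circ \cdots \circ \phi_1 \colon B(G) \to B(H)
\]
is a well-defined map. Continuity follows from the standard identity $f^{-1}(U) = \phi_1^{-1}\bigl(\cdots \phi_k^{-1}(U)\bigr)$, so that the preimage of an open set in $B(H)$ is open in $B(G)$; surjectivity follows because a composition of surjections is surjective.

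I do not expect a substantive obstacle here: the content of the statement is carried entirely by Theorem \ref{VertexIdentificationisContinuous}, and the remainder is bookkeeping. The only point requiring genuine care is confirming that the chain is \emph{legitimately composable} — that is, that each intermediate identification is performed on distinct vertices of the graph produced by the previous step, so that each $\phi_i$ really is a vertex-identification map in the sense of Definition \ref{Vertex-Identification}, and that the bare representations line up exactly so the maps compose without reindexing. Once this is made explicit, the continuity and surjectivity of $f$ are immediate from the corresponding properties of continuous maps between topological spaces.
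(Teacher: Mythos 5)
Your proposal is correct and follows essentially the same route as the paper: apply Theorem \ref{VertexIdentificationisContinuous} to each single identification step and conclude by noting that compositions of continuous surjections are continuous surjections. The paper states this in two sentences; your version merely makes the chain $G = G_0, G_1, \dots, G_k = H$ and the induction explicit, which is sound bookkeeping rather than a different argument.
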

	
	\begin{proof}
		By Theorem \ref{VertexIdentificationisContinuous}, vertex-identification maps are surjective and continuous. The composition of a sequence of vertex-identification maps remains continuous and surjective.
	\end{proof}
	
	The contraction induces a continuous map \(\phi_{u,v}: B(G) \to B(G/e)\) where \(e=uv\) and this map sends both vertices and the edge to the single vertex in \( B(G/e) \). Hence, \(\phi\) does not preserve adjacency information, it fails to be a homomorphism. In contrast to vertex-identification maps that are not edge-contraction, the map \(\phi_{u,v}: B(G) \to B(G/\{u,v\})\) preserves adjacency	.
	
	\begin{corollary}
		If \(G\) is a subdivision of \(H\) then there exists a surjective continuous map from \(G\) to \(H\).
	\end{corollary}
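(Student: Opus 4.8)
The plan is to realize \(H\) as the result of applying a finite sequence of edge contractions to \(G\), and then to invoke the preceding Corollary on continuous surjections together with Theorem \ref{VertexIdentificationisContinuous}. The guiding observation is that subdivision and edge contraction are, in the appropriate sense, inverse operations: inserting a degree-\(2\) vertex into an edge is undone by contracting one of the two edges thereby created.

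Concretely, I would first fix the combinatorial correspondence between \(G\) and \(H\). Each edge \(e = ab \in E(H)\) that is subdivided corresponds in \(G\) to an internally disjoint path
\[
a = x_0\, x_1\, x_2 \cdots x_{k-1}\, x_k = b,
\]
where \(x_1, \dots, x_{k-1}\) are the newly inserted vertices, each of degree \(2\), while edges of \(H\) that are not subdivided survive unchanged in \(G\). To recover \(ab\) from this path I would contract the edges \(x_0x_1, x_1x_2, \dots\) one at a time; each contraction merges two adjacent vertices, shortens the path by one, and after \(k-1\) steps collapses the path down to the single edge joining \(a\) and \(b\). Performing this for every subdivided edge of \(H\) exhibits \(H\) as the terminal graph of a finite sequence of edge contractions beginning from \(G\).

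Since an edge contraction is precisely a vertex-identification of two adjacent vertices, the composite of all these contraction maps is, by the Corollary on continuous surjections (which itself rests on Theorem \ref{VertexIdentificationisContinuous}), a surjective continuous map \(B(G) \to B(H)\). This is exactly the map asserted to exist.

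The step I expect to require the most care is verifying that this sequence of contractions returns precisely \(H\) as a \emph{simple} graph rather than a multigraph. Because the internal vertices \(x_1, \dots, x_{k-1}\) have degree \(2\) and lie on a path with distinct endpoints \(a \neq b\), no loop is created, and since \(H\) was simple to begin with, no parallel edges arise when the path is collapsed; but this must be checked explicitly, and in particular one must ensure that two different subdivided edges of \(H\) sharing a common endpoint do not interfere with one another during the contraction process. Once simplicity is confirmed at each intermediate stage, continuity and surjectivity follow formally from the results already established.
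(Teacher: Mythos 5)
Your proposal is correct and takes essentially the same route as the paper: the paper likewise undoes each vertex insertion by contracting one of the two edges it created, and then composes the resulting edge contraction maps, citing Theorem \ref{VertexIdentificationisContinuous} and the corollary on continuous surjections. Your additional check that the intermediate graphs remain simple is a reasonable extra precaution, but the core argument is the paper's own.
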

	\begin{proof}
		If \(H_1\) is obtained from \(H\) by inserting a vertex \(w\) to an edge \(e=uv\), then we can contract the edge \(wv\) inducing a continuous map from \(H_1\) to \(H\). Since \(G\) is a subdivision of \(H\), there is a sequence of edge contractions from \(G\) to \(H\) that induces surjective continuous map from \(G\) to \(H\).
	\end{proof}
	
	\begin{remark}
		Given a subdivision \(G\) of a graph \(H\) by inserting the vertex \(w\) on the edge \(uv\) of \(H\), we may define a continuous surjective map different from edge contractions as follows: every vertices not adjacent to \(w\) in \(G\) are mapped to themselves in \(H\), every edges not incident to \(w\) are mapped to themselves. The vertex \(w\) and the edges \(uw\) and \(vw\) are all sent to the edge \(uv\).
	\end{remark}
	
	\begin{remark}
		It should be curious to know which maps are continuous. One thing is sure, a continuous map must map a connected Bare representations to a connected Bare representations. So, in most cases, the continuity assumption allows us to prove results in connected graphs and generalized easily to disconnected ones.
	\end{remark}
	
	From the results above, we notice a very simple property of continuous maps.
	
	\begin{theorem}
		\label{ContinuityPreservesIncidence}
		Let \(f:B(G) \rightarrow B(H)\) be a continuous map. If \(v \in V(G)\) and \(e \in E(G)\) are incident in \(G\), then either \(f(v) = f(e)\), or \(f(v)\) is a vertex and \(f(e)\) is an edge incident to \(f(v)\) in \(H\).
	\end{theorem}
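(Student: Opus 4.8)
The plan is to exploit the Alexandroff character of the star topology---specifically the existence of minimal open neighborhoods---together with continuity. The structural fact I rely on, already contained in Theorem \ref{BasicProperty}(3), is that for any vertex the open star $S_G(v)$ is the \emph{smallest} open set containing $v$: every open set containing $v$ must contain all of $S_G(v)$, and $S_G(v)$ is itself open. Dually, in $B(H)$ every edge is an open singleton (Theorem \ref{BasicProperty}(1)), so the smallest open set containing an edge $e'$ is just $\{e'\}$. Since $e$ is incident to $v$ in $G$, we have $e \in S_G(v)$, and the argument then pivots on whether the image point $f(v) \in B(H)$ is an edge of $H$ or a vertex of $H$.

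\textbf{Case 1: $f(v)$ is an edge of $H$.} Then the minimal open neighborhood of $f(v)$ is the singleton $\{f(v)\}$, so by continuity $f^{-1}(\{f(v)\})$ is open and contains $v$; by minimality of $S_G(v)$ it contains all of $S_G(v)$, in particular $e$. Hence $f(e) \in \{f(v)\}$, which forces $f(e) = f(v)$. \textbf{Case 2: $f(v)$ is a vertex $w$ of $H$.} Then the minimal open neighborhood of $f(v) = w$ is $S_H(w)$, so $f^{-1}(S_H(w))$ is open and contains $v$, hence contains $S_G(v) \ni e$; thus $f(e) \in S_H(w) = \{w\} \cup \{\,e' \in E(H) : e' \text{ incident to } w\,\}$. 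Either $f(e) = w = f(v)$, or $f(e)$ is an edge incident to $f(v)$ in $H$---precisely the two alternatives in the statement.

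Combining the cases gives the claimed dichotomy. I do not anticipate any serious obstacle: the argument is essentially a single application of the minimal-neighborhood principle, and the only care required is to invoke the correct minimal neighborhood ($\{f(v)\}$ when $f(v)$ is an edge, $S_H(f(v))$ when $f(v)$ is a vertex) and to note that $B(H)$ carries the same kind of star topology, so that these minimal neighborhoods are indeed available there as well.
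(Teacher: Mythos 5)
Your proposal is correct and follows essentially the same argument as the paper's proof: both split into the cases where \(f(v)\) is an edge (so the open singleton \(\{f(v)\}\) pulls back to an open set containing \(S_G(v)\), forcing \(f(e)=f(v)\)) and where \(f(v)\) is a vertex (so \(f^{-1}(S_H(f(v)))\) is open, contains \(S_G(v)\), and hence \(f(e) \in S_H(f(v))\)). Your "minimal open neighborhood" phrasing is just Theorem \ref{BasicProperty}(3) in Alexandroff language, so the two proofs coincide in substance.
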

	\begin{proof}
		Let \( v \in V(G) \) and \( e \in E(G) \) be incident in \( G \). Since \( f \) is continuous, preimages of open sets are open. If \( f(v) \) is a vertex, then \( S_H(f(v)) \) is open in \(B(H)\), so its preimage contains \(S_G(v)\). Now, \(S_G(v)\) contains \( e \) and so, \( f(e) \in S_H(f(v)) \). Hence, either \( f(e) = f(v) \) or \( f(e) \) is an edge incident to \( f(v) \). If \( f(v) \) is an edge, its singleton set \(\{f(v)\}\) is open. Thus, \(f(S_G(v)) = \{f(v)\}\), forcing \( f(e) = f(v) \).
	\end{proof}
	
	Theorem \ref{ContinuityPreservesIncidence} demonstrates that continuity ensures the preservation of incidence relations, either by mapping vertices and edges to the same point or maintaining their adjacency in the target space. Moreover, we can use this result to guarantee the existence of a continuous vertex map from a continuous map.
	
	\begin{theorem}
		\label{ContinuousmapsmeansContinuousVertexmap}
		If there is a continuous \(f:B(G) \rightarrow B(H)\) then there is a continuous vertex map \(\tilde{f}:B(G) \rightarrow B(H)\).
	\end{theorem}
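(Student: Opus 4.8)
The plan is to construct $\tilde{f}$ by keeping $f$ unchanged on edges and only adjusting it on those vertices that $f$ sends into $E(H)$. Concretely, I would set $\tilde{f}(e) = f(e)$ for every edge $e \in E(G)$; for a vertex $v$ I would put $\tilde{f}(v) = f(v)$ whenever $f(v)$ is already a vertex, while if $f(v)$ is an edge $ab \in E(H)$ I would let $\tilde{f}(v)$ be one arbitrarily chosen endpoint of that edge. By construction $\tilde{f}$ sends every vertex of $G$ to a vertex of $H$, so it is a vertex map; the whole content of the statement is that this modified map is still continuous.

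To verify continuity I would invoke Remark \ref{ContinuityviaSub-basis} and check only that $\tilde{f}^{-1}(S_H(w))$ is open for each vertex $w \in V(H)$, then apply the open-set criterion of Theorem \ref{BasicProperty}(3): it suffices to show that whenever a vertex $v$ lies in $\tilde{f}^{-1}(S_H(w))$, the whole star $S_G(v)$ lies there too. Since $\tilde{f}(v)$ is a vertex and the only vertex contained in $S_H(w)$ is $w$ itself, membership of $v$ forces $\tilde{f}(v) = w$, so the task reduces to showing $\tilde{f}(e) = f(e) \in S_H(w)$ for every edge $e$ incident to $v$.

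The key step is a two-case analysis driven by Theorem \ref{ContinuityPreservesIncidence} applied to the original map $f$ at the incident pair $(v,e)$. If $f(v)$ is a vertex, then $\tilde{f}(v) = f(v) = w$, and the theorem gives either $f(e) = w$ or $f(e)$ an edge incident to $w$; in both situations $f(e) \in S_H(w)$. If instead $f(v)$ is an edge, then the second alternative of Theorem \ref{ContinuityPreservesIncidence} is impossible, since it requires $f(v)$ to be a vertex, so we must have $f(e) = f(v)$; because $w$ was chosen as an endpoint of $f(v)$, the edge $f(e) = f(v)$ is incident to $w$ and hence lies in $S_H(w)$. Either way $S_G(v) \subseteq \tilde{f}^{-1}(S_H(w))$, so the preimage is open and $\tilde{f}$ is continuous.

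I expect the only delicate point to be the case where $f$ collapses a vertex onto an edge: there one must notice that Theorem \ref{ContinuityPreservesIncidence} already forces every edge incident to $v$ to map to that same edge $f(v)$, which is exactly what makes rerouting $v$ to either endpoint harmless. It is also worth remarking that no global compatibility is needed across shared edges: an edge $v_1v_2$ appears in the two stars $S_G(v_1)$ and $S_G(v_2)$, but the constraint at each endpoint is checked independently, and since both endpoints of $\tilde{f}(v_1v_2)$ contain that edge in their stars, any local choice of endpoints is automatically consistent.
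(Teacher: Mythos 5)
Your proof is correct, but it takes a genuinely different route from the paper's, and the difference matters. Both arguments hinge on Theorem \ref{ContinuityPreservesIncidence}, but the constructions diverge: the paper, assuming $G$ connected, collects for each edge $e = x_e y_e$ of $H$ the set $U_e$ of vertices of $G$ mapped to $e$, redefines $\tilde{f}$ on the \emph{entire} open star $S_G(u)$ of each $u \in U_e$ (the vertex together with all its incident edges), sending the whole star to a single endpoint of $e$, and chooses that endpoint by inspecting the sets $X_e$, $Y_e$ of outside neighbours of $U_e$ mapped to $x_e$, respectively $y_e$; it then ends without verifying continuity. You instead leave every edge's image untouched and reroute only the offending vertices, each to an arbitrarily chosen endpoint. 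That choice is decisive: because $\tilde{f} = f$ on edges, your verification is purely local --- Theorem \ref{ContinuityPreservesIncidence} forces every edge incident to a collapsed vertex $v$ to have image exactly the edge $f(v)$, which lies in the star of either endpoint --- so no connectedness hypothesis and no coordination between different collapsed vertices is needed. The paper's re-mapping of edges to a single endpoint, by contrast, creates constraints at the far ends of those edges which its selection rules do not actually meet: take $G$ the path $z_1 u z_2$, $H = K_2$ with edge $e = x_e y_e$, and $f$ the continuous map with $f(z_1) = x_e$, $f(u) = e$, $f(z_2) = y_e$ and both edges sent to $e$; here $X_e = \{z_1\}$ and $Y_e = \{z_2\}$ are both nonempty, the paper's rule sends all of $S_G(u)$ (including the edge $z_1 u$) to $y_e$, and then $\tilde{f}^{-1}(S_H(x_e))$ contains the vertex $z_1$ but not its incident edge $z_1 u$, hence is not open by part (3) of Theorem \ref{BasicProperty}. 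Your edge-preserving construction is exactly what avoids this defect (the edge $z_1 u$ stays at $e \in S_H(x_e) \cap S_H(y_e)$), it supplies the continuity check the paper omits, and it works verbatim for disconnected $G$. In short, your argument is not only valid but simpler and more robust than the printed proof, which it effectively repairs.
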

	\begin{proof}
		We may assume that \(G\) is connected. We construct \(\tilde{f}\) from \(f\). Let \(E_0\) be the set of all the edges of \(H\) that has a vertex in their pre-image. For each \(e \in E_0\), let \(U_e\) be the vertices mapped to \(e\). Let \(\mathcal{S}\) be the union of all the (disjoint) open stars with center in \(E_0\).
		
		By Theorem \ref{ContinuityPreservesIncidence} and connectedness of \(G\), the set of vertices of \(G\) outside \(U_e\) that are adjacent to some vertex in \(U_e\) must be mapped to either one of the end-vertices \(x_e\) and \(y_e\) of \(e\). Let \(X_e\) and \(Y_e\) be the sets of vertices in \(G\) outside of \(U_e\) that are adjacent to some vertex in \(U_e\) that are mapped to \(x_e\) and \(y_e\), respectively.

		We define \(\tilde{f}\) as follows:
		
		\begin{enumerate}
			\item For every point \(p \in B(G) \setminus \mathcal{S}\), \(\tilde{f}(p) = f(p).\)
			\item For every open star \(S_G(u) \subseteq \mathcal{S}\), if \(X_e\) and \(Y_e\) are both non-empty where \(f(u) = e\) then we may arbitrarily choose \(\tilde{f}(p) = y_e \; \text{for all} \; p \in S_G(u).\)
			\item For every open star \(S_G(u) \subseteq \mathcal{S}\), if either \(X_e\) or \(Y_e\) is empty where \(f(u) = e\) then
			\[\tilde{f}(p) = x_e \; \text{for all} \; p \in S_G(u) \; if X_e = \emptyset,\]
			otherwise,
			\[\tilde{f}(p) = y_e \; \text{for all} \; p \in S_G(u) \; if Y_e = \emptyset.\]
		\end{enumerate}
	\end{proof}
	
	We establish Theorem \ref{injective continuous maps are homomorphism}, which shows that under certain conditions, injective continuous maps correspond precisely to graph homomorphisms.
	
	\begin{theorem}
		\label{injective continuous maps are homomorphism}
		Let \(G\) and \(H\) be graphs. Suppose \(f: B(G) \rightarrow B(H)\) is an injective continuous map. Then, the restriction
		\[
		f|_{V(G)}: V(G) \rightarrow V(H)
		\]
		is an injective graph homomorphism.
	\end{theorem}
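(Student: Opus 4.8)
The plan is to let the single structural lemma, Theorem~\ref{ContinuityPreservesIncidence}, do the heavy lifting, with injectivity serving only to eliminate the ``collapse'' alternatives it permits. The statement really has two components: that $f$ carries (non-isolated) vertices to vertices, so that $f|_{V(G)}$ is a genuine map into $V(H)$, and that this map respects edges. I would treat both simultaneously by fixing an arbitrary edge $e = uv \in E(G)$ and examining the two incident pairs $(u,e)$ and $(v,e)$.

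Applying Theorem~\ref{ContinuityPreservesIncidence} to the incident pair $(u,e)$ gives that either $f(u) = f(e)$, or else $f(u)$ is a vertex and $f(e)$ is an edge incident to $f(u)$. The first alternative is impossible, since $u \neq e$ as points of $B(G)$ and so injectivity forces $f(u) \neq f(e)$. Hence $f(u)$ is a vertex of $H$ and $f(e)$ is an edge incident to it; running the same argument on $(v,e)$ shows $f(v)$ is a vertex and $f(e)$ is incident to $f(v)$ as well. Injectivity gives $f(u) \neq f(v)$, and since in a simple graph an edge is determined by its unordered pair of endpoints, $f(e)$ must be exactly the edge $f(u)f(v)$. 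Thus $f(u)f(v) \in E(H)$, which is precisely the strong-homomorphism condition, and along the way every endpoint of an edge has been shown to land on a vertex.

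It then remains to record that $f|_{V(G)}$ is well defined into $V(H)$ and injective. Injectivity is inherited verbatim from $f$. Well-definedness is immediate for every non-isolated vertex, since such a vertex is an endpoint of some edge and the paragraph above already placed its image in $V(H)$; the restriction is then a map $V(G) \to V(H)$ satisfying the homomorphism condition, so that $f$ is an incidence map in the sense of Remark~\ref{IncidenceMapandIndependentSets} on the non-isolated part of $G$.

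The one genuinely delicate point, and the step I expect to be the real obstacle, is the isolated vertices, about which Theorem~\ref{ContinuityPreservesIncidence} is silent. If $w$ is isolated then $\{w\}$ is open by Corollary~\ref{IsolatedVertexProperties}, so continuity and injectivity only force $f^{-1}(\{f(w)\}) = \{w\}$ to be open and impose no constraint making $f(w)$ a vertex; in fact one can map an isolated vertex injectively and continuously onto an \emph{edge} of $H$, so the conclusion can fail verbatim in this degenerate case. I would therefore either adopt the standing hypothesis that $G$ has no isolated vertices, or interpret the conclusion as the homomorphism property on the subgraph induced by the non-isolated vertices. Under either reading, combining the edge computation above with the inherited injectivity yields that $f|_{V(G)}$ is an injective graph homomorphism.
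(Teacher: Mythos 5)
Your proposal is correct and follows essentially the same route as the paper's own proof: apply Theorem \ref{ContinuityPreservesIncidence} to an incident pair $(u,e)$, use injectivity to rule out the collapse alternative $f(u)=f(e)$, and conclude that $f(u)$ and $f(v)$ are the two distinct endpoints of the edge $f(e)$, hence adjacent. Your worry about isolated vertices is also well founded and is in fact a gap in the paper, not in your argument: the paper's proof begins ``Since $G$ has no isolated vertex,'' a hypothesis that appears nowhere in the statement of the theorem, and your counterexample is valid --- mapping the vertex of $N_1$ to the edge of $K_2$ is injective and continuous, because $\{w\}$ is open by Corollary \ref{IsolatedVertexProperties}, yet the image is not a vertex. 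So your proposed repair (adding the no-isolated-vertex hypothesis, or reading the conclusion on the subgraph induced by the non-isolated vertices) is exactly what is needed to make the statement, and both proofs, correct.
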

	\begin{proof}
		Let \(u\) be a vertex in \(G\). Since \(G\) has no isolated vertex, there is an edge \(e\) incident to \(v\) in \(G\). Since \(f\) is injective, \(f(u)\) and \(f(e)\) are distinct in \(B(H)\). Since \(f\) is continuous, then \(f(u)\) is a vertex and \(f(e)\) is an edge incident to \(f(u)\) in \(B(H)\) by Theorem \ref{ContinuityPreservesIncidence}. Hence, \(f\) maps vertices to vertices and so, the restriction
		\[f|_{V(G)}:V(G) \rightarrow V(H)\]
		is a well-defined injective function. It remains to show that \(f\) is a homomorphism. Let \(u\) and \(v\) be adjacent in \(G\) and that \(e=uv \in E(G)\). From what we have shown above, \(f(u)\) and \(f(v)\) are vertices both incident to the edge \(f(e)\) in \(B(H)\). It follows that \(f(u)\) and \(f(v)\) are adjacent in \(B(H)\). Therefore, \(f|_{V(G)}\) is a homomorphism of graphs.
	\end{proof}
	
	\begin{corollary}
		A homeomorphism between the Bare representations of two graphs \(G\) and \(H\) induces a graph isomorphism from \(G\) to \(H\).
	\end{corollary}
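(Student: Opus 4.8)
The plan is to apply Theorem \ref{injective continuous maps are homomorphism} twice, once to the homeomorphism \(f\) and once to its inverse \(f^{-1}\), and then to glue the two resulting homomorphisms into a single isomorphism. Since a homeomorphism is in particular an injective continuous map, the theorem immediately gives that the restriction \(f|_{V(G)} : V(G) \to V(H)\) is an injective graph homomorphism. Because \(f^{-1}\) is itself a continuous bijection, applying the same theorem to \(f^{-1}\) shows that \(f^{-1}|_{V(H)} : V(H) \to V(G)\) is an injective graph homomorphism. In particular, both \(f\) and \(f^{-1}\) send vertices to vertices.

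First I would upgrade the injection \(f|_{V(G)}\) to a bijection of vertex sets. Injectivity is inherited directly from \(f\). For surjectivity, given any \(w \in V(H)\), the point \(f^{-1}(w)\) is a vertex of \(G\) because \(f^{-1}\) sends vertices to vertices, so \(w = f\bigl(f^{-1}(w)\bigr)\) lies in the image of \(f|_{V(G)}\); hence \(f|_{V(G)}\) maps \(V(G)\) onto \(V(H)\). Since \(f\) is a bijection of \(B(G)\) onto \(B(H)\) carrying \(V(G)\) bijectively to \(V(H)\), it must also carry the complementary set \(E(G)\) bijectively onto \(E(H)\).

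Next I would promote the vertex homomorphism to an isomorphism using both directions of edge preservation. If \(uv \in E(G)\), then \(f(u)f(v) \in E(H)\) because \(f|_{V(G)}\) is a homomorphism; conversely, if \(f(u)f(v) \in E(H)\), then applying the homomorphism \(f^{-1}|_{V(H)}\) yields \(uv = f^{-1}(f(u))\,f^{-1}(f(v)) \in E(G)\). Thus \(uv \in E(G)\) if and only if \(f(u)f(v) \in E(H)\), which is exactly the assertion that the vertex bijection \(f|_{V(G)}\) is a graph isomorphism from \(G\) to \(H\).

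The main obstacle is the hypothesis hidden in the proof of Theorem \ref{injective continuous maps are homomorphism}, which uses that the source graph has no isolated vertex in order to force vertices to map to vertices; the argument above therefore needs supplementing for isolated vertices of \(G\) and \(H\). I expect to resolve this using the clopen structure of the star topology: an isolated vertex \(v\) has \(\{v\}\) both open and closed by Corollary \ref{IsolatedVertexProperties}, whereas a single edge \(\{e\}\) is open by Theorem \ref{BasicProperty} but not closed, since by Theorem \ref{Characterization of Open Sets} no subgraph has \(\{e\}\) as its bare representation (any subgraph containing \(e\) must contain its endpoints). As a homeomorphism preserves clopen sets, the image of an isolated vertex is again a clopen singleton and hence an isolated vertex; this guarantees that \(f\) and symmetrically \(f^{-1}\) send vertices to vertices on all of \(V(G)\) and \(V(H)\), completing the vertex correspondence and thereby the isomorphism.
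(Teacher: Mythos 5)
Your proof is correct, and its skeleton is the same as the paper's: apply Theorem \ref{injective continuous maps are homomorphism} to both \(f\) and \(f^{-1}\) and combine the two injective homomorphisms into an isomorphism. Where you genuinely differ is in the reduction to the no-isolated-vertex case, which is needed because the proof of that theorem silently assumes \(G\) has no isolated vertex even though its statement omits the hypothesis. The paper disposes of isolated vertices by noting that a homeomorphism matches up connected components and that an isolated vertex is precisely a singleton component; you instead characterize isolated vertices as the clopen singletons of the star topology (non-isolated vertex singletons are closed but not open by Corollary \ref{IsolatedVertexProperties}, while edge singletons are open by Theorem \ref{BasicProperty} but never closed by Theorem \ref{Characterization of Open Sets}), so a homeomorphism must carry them to isolated vertices. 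Both reductions are sound; yours has the merit of making the hidden hypothesis explicit and of resting on an intrinsic topological invariant rather than component bookkeeping, and you also spell out two steps the paper leaves implicit, namely surjectivity of \(f|_{V(G)}\) onto \(V(H)\) and the two-way edge preservation that upgrades a pair of mutually inverse homomorphisms to an isomorphism. One loose end to tidy: having shown isolated vertices map to isolated vertices, you should either restrict \(f\) to a homeomorphism between the bare representations of the graphs with isolated vertices deleted (legitimate, since the deleted sets are clopen) and apply the theorem there, or observe that the theorem's proof works pointwise at each non-isolated vertex; as phrased, you still invoke the theorem on graphs that may contain isolated vertices.
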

	
	\begin{proof}
		Let \(f: B(G) \to B(H)\) be a homeomorphism. By definition, \(f\) is a bijective continuous map with a continuous inverse. If \(G\) is disconnected, then the continuity of \(f\) and its inverse maps the components of \(G\) in one-to-one correspondence to the components of \(H\). In particular, isolated vertices in \(G\) must be mapped to isolated vertices in \(H\). Hence, we may assume that \(G\) and \(H\) has no isolated vertices.
		
		Since \(f\) is injective and continuous, by Theorem \ref{injective continuous maps are homomorphism}, the restriction \(f|_{V(G)}: V(G) \to V(H)\) is a graph homomorphism. The inverse map \(f^{-1}\) is also continuous and injective. Applying Theorem \ref{injective continuous maps are homomorphism} to \(f^{-1}\), its restriction \(f^{-1}|_{V(H)}: V(H) \to V(G)\) is a graph homomorphism. Thus, a homeomorphism between \(B(G)\) and \(B(H)\) induces a graph isomorphism from \(G\) to \(H\).
	\end{proof}
	
	On the other hand, we can show that contraction maps send connected induced subgraphs to a vertex.
	
	\begin{theorem}
		\label{ContractionPreimageConnectedSets}
		Let \(f:B(G) \rightarrow B(H)\) be a contraction map. For every \(v \in f(V(G))\), there is a set \(S\) of vertices of \(G\) such that
		\[f^{-1}(v) = B(G[S]).\]
		Moreover, \(G[S]\) (or \(B(G[S])\)) is connected.
	\end{theorem}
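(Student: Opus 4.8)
The plan is to induct on the number $n$ of edge contractions that make up $f$, establishing simultaneously the identification $f^{-1}(v) = B(G[S])$ and the connectedness of $G[S]$. Before starting the induction I would isolate the one fact that drives everything: for any contraction map $f$ and any edge $ab \in E(G)$, the image $f(ab)$ is a vertex if and only if $f(a) = f(b)$, in which case $f(ab) = f(a)$; otherwise $f(ab)$ is the edge $f(a)f(b)$ with $f(a) \neq f(b)$. This edge lemma is itself proved by induction via the decomposition $f = \phi \circ g$, where $g$ is a contraction map of $n-1$ steps and $\phi$ collapses a single edge $pq$ to a vertex $r$: an edge stays an edge under $\phi$ unless its two distinct endpoints are exactly $p$ and $q$, which is precisely the situation in which those endpoints become identified.

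Granting the edge lemma, the structural claim needs no further induction. Fix $v \in f(V(G))$ and put $S = f^{-1}(v) \cap V(G)$, which is nonempty by hypothesis; the vertices of $f^{-1}(v)$ are then exactly $S$. An edge $ab \in E(G)$ lies in $f^{-1}(v)$ iff $f(ab) = v$, and since this image is a vertex the lemma forces $f(a) = f(b) = v$, i.e. $a,b \in S$. Conversely any edge with both endpoints in $S$ maps to $v$. Hence the edges of $f^{-1}(v)$ are precisely those of $G$ with both endpoints in $S$, which is exactly $E(G[S])$, and so $f^{-1}(v) = S \cup E(G[S]) = B(G[S])$.

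For connectedness I would return to the induction $f = \phi \circ g$, using that contraction maps are surjective (by the earlier corollary on continuous surjections). When $v \neq r$ we have $\phi^{-1}(v) = \{v\}$, so $f^{-1}(v) = g^{-1}(v)$ and the inductive hypothesis gives connectedness at once. The essential case is $v = r$: here $\phi^{-1}(r) = \{p, q, pq\} = B(G'[\{p,q\}])$, so $f^{-1}(r) = g^{-1}(p) \cup g^{-1}(q) \cup g^{-1}(pq)$. Surjectivity of $g$ gives $p,q \in g(V(G))$, so by the inductive hypothesis $g^{-1}(p) = B(G[A])$ and $g^{-1}(q) = B(G[B])$ with $G[A], G[B]$ connected and $A,B$ disjoint and nonempty, whence $S = A \cup B$. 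The piece $g^{-1}(pq)$ consists of the edges $g$ sends onto the edge $pq$; by the edge lemma these are exactly the edges of $G$ joining $A$ to $B$, and the set is nonempty since $g$ is also surjective on edges. Thus $G[A \cup B]$ is the union of the two connected induced subgraphs $G[A]$ and $G[B]$ together with at least one edge between them, hence connected.

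The main obstacle I anticipate is bookkeeping rather than conceptual: tracking, under a single edge contraction, which edges of $G'$ pull back to the collapsed edge versus to incident edges, and confirming in the simple-graph setting that $\{g(a),g(b)\} = \{p,q\}$ is equivalent to $g(a)g(b)$ being the contracted edge, so that no unintended vertex identifications occur. Once the edge lemma is pinned down precisely, both the identification $f^{-1}(v) = B(G[S])$ and the "two connected pieces joined by an edge" step for connectedness follow cleanly.
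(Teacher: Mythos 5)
Your proposal is correct, but it takes a genuinely different route from the paper's. The paper argues directly, with no induction: it applies Theorem~\ref{ContinuityPreservesIncidence} to conclude that any edge in $f^{-1}(v)$ has both endpoints mapped to $v$ (which gives $f^{-1}(v) \subseteq B(G[S])$), then simply asserts that these edges ``are just the edges of $G[S]$,'' and disposes of connectedness with the one-line remark that otherwise $\{v\}$ would be disconnected. Your induction on the number of edge contractions, organized around the edge lemma ($f(ab)$ is a vertex iff $f(a)=f(b)$, in which case $f(ab)=f(a)$; otherwise $f(ab)$ is the edge $f(a)f(b)$), supplies proofs of exactly the two points the paper leaves unjustified. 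First, the reverse inclusion $E(G[S]) \subseteq f^{-1}(v)$ is genuinely contraction-specific: for a merely continuous vertex map it can fail (send both vertices of $K_2$ to a single endpoint of the edge of another copy of $K_2$ and send the edge to that edge; this map is continuous, yet the edge has both endpoints in $S$ without mapping to $v$), so an appeal to continuity alone cannot close this step, while your edge lemma does. Second, the paper's connectedness justification is not a valid deduction---preimages of connected sets under continuous maps need not be connected---whereas your inductive step ($f^{-1}(r) = g^{-1}(p) \cup g^{-1}(q) \cup g^{-1}(pq)$: two connected fibers joined by at least one crossing edge, whose existence comes from surjectivity of $g$ via Theorem~\ref{VertexIdentificationisContinuous}) is a complete argument. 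The trade-off is that the paper's proof is shorter and reuses its general continuity machinery, while your longer induction is self-contained and rigorous precisely where the contraction hypothesis matters; the only elision on your side is the trivial base case of the identity map.
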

	\begin{proof}
		Let \(S\) to be the set of all vertices in \(G\) mapped to \(v\). Let \(xy \in f^{-1}(v)\) be an edge in \(G\). Then the end-vertices \(x\) and \(y\) are both mapped to \(v\) by Theorem \ref{ContinuityPreservesIncidence}. It follows that \(f^{-1}(v)\) consists of \(S\) and edges with end-vertices in \(S\). But these edges are just the edges of \(G[S]\). Hence,
		\[f^{-1}(v) = B(G[S]).\]
		Furthermore, \(f^{-1}(v)\) can not be disconnected, otherwise, \(\{v\}\) is disconnected.
	\end{proof}
	
	Building on Theorem \ref{ContinuityPreservesIncidence} and \ref{ContinuousmapsmeansContinuousVertexmap}, we see that continuous vertex maps will almost always exists  and therefore, their properties must be carefully examined. Our earlier results establish that all continuous maps between graphs must preserve relations. In the following result, we show that continuous vertex maps can be decomposed into contractions and incidence mappings.
	
	\begin{lemma}
		\label{ContinuousVertexMapsFactorization}
		Let \(G\) and \(H\) be graphs and let \(f:B(G) \rightarrow B(H)\) be a continuous vertex map. If \(u\) and \(v\) are distinct vertices in \(G\), then there is a continuous vertex map \(h:B(G/\{u,v\}) \rightarrow B(H)\) such that following diagram commute:
		\[
		\begin{tikzcd}[column sep=large, row sep=large]
			B(G) \arrow[r, "\phi_{u,v}"] \arrow[rd, "f"'] & B(G/\{u,v\}) \arrow[d, "h"] \\
			& B(H)
		\end{tikzcd}
		\]
	\end{lemma}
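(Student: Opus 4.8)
The plan is to build \(h\) as the map induced by \(f\) through the quotient \(\phi_{u,v}\). Since \(\phi_{u,v}\) is surjective (Theorem \ref{VertexIdentificationisContinuous}), every point of \(B(G/\{u,v\})\) is of the form \(\phi_{u,v}(p)\), and I would simply set \(h(\phi_{u,v}(p)) = f(p)\). Three things then need checking: that \(\phi_{u,v}\) is a genuine quotient map (not merely continuous and surjective), that \(f\) is constant on the fibers of \(\phi_{u,v}\) so that \(h\) is well defined, and that \(h\) sends vertices to vertices. Granting the first two, continuity of \(h\) is automatic: for a subbasic open \(S_H(t)\) we have \(\phi_{u,v}^{-1}(h^{-1}(S_H(t))) = f^{-1}(S_H(t))\), which is open by continuity of \(f\), and the quotient property forces \(h^{-1}(S_H(t))\) open. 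The vertex-map check is equally routine, since \(h(x) = f(x)\) for \(x \neq w\) and \(h(w) = f(u)\) are all vertices of \(H\) because \(f\) is a vertex map.

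For the quotient property I would use Theorem \ref{BasicProperty}(3): given \(O \subseteq B(G/\{u,v\})\) with \(\phi_{u,v}^{-1}(O)\) open, I must show \(S_{G/\{u,v\}}(t) \subseteq O\) for each vertex \(t \in O\). The preimage computations already in the proof of Theorem \ref{VertexIdentificationisContinuous}, namely \(\phi_{u,v}^{-1}(S_{G/\{u,v\}}(x)) = S_G(x)\) for \(x \neq w\) and \(\phi_{u,v}^{-1}(S_{G/\{u,v\}}(w)) = S_G(u) \cup S_G(v)\), give by surjectivity \(\phi_{u,v}(S_G(x)) = S_{G/\{u,v\}}(x)\) and \(\phi_{u,v}(S_G(u) \cup S_G(v)) = S_{G/\{u,v\}}(w)\). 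Then, if \(t \neq w\), the unique point over \(t\) is \(t\) itself, so \(S_G(t) \subseteq \phi_{u,v}^{-1}(O)\) and hence \(S_{G/\{u,v\}}(t) = \phi_{u,v}(S_G(t)) \subseteq O\); and if \(t = w\), both \(S_G(u), S_G(v) \subseteq \phi_{u,v}^{-1}(O)\), giving \(S_{G/\{u,v\}}(w) \subseteq O\).

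The hard part will be the well-definedness of \(h\). Commutativity already forces \(h(w) = f(u) = f(v)\), so \(f(u) = f(v)\) is a necessary hypothesis that I must read into the statement (it is what the surrounding discussion of contractions intends). Writing \(f(u)=f(v)=p\), the non-singleton fibers of \(\phi_{u,v}\) are the vertex fiber \(\{u,v\}\) (settled by \(f(u)=f(v)\)); the fiber of \(w\) in the edge-contraction case, where I would invoke Theorem \ref{ContinuityPreservesIncidence} on the incidence of \(uv\) with \(u\) to force \(f(uv)=p\); and, most delicately, a fiber \(\{ux, vx\}\) over an edge \(wx\) whenever both \(ux,vx \in E(G)\). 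Here I need \(f(ux) = f(vx)\). Setting \(q = f(x)\), Theorem \ref{ContinuityPreservesIncidence} applied to both endpoints of each edge forces, when \(q \neq p\), both images to equal the unique edge \(pq\), giving equality. The genuinely problematic case is \(q = p\): the incidence constraints then only force \(f(ux), f(vx) \in \{p\} \cup \{\text{edges at } p\}\) and do not by themselves equate the two values, so this is exactly where the factorization can fail and where the crux of any correct argument must lie. I would therefore attack this case first, either by extracting a further consequence of continuity that prevents \(f\) from separating two edges identified by \(\phi_{u,v}\), or by strengthening the hypotheses (for instance to vertices \(u,v\) with no common neighbour, so that no pair of edges is ever merged) to guarantee well-definedness.
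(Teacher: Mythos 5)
Your construction is the same one the paper uses: define \(h\) on the quotient by \(h(\phi_{u,v}(p)) = f(p)\), check well-definedness fiber by fiber via Theorem \ref{ContinuityPreservesIncidence}, and deduce continuity from the subbasis preimage computations (your verification that \(\phi_{u,v}\) is a quotient map is correct, and is cleaner than what the paper writes). You are also right that \(f(u)=f(v)\) must be read into the statement; the paper's proof assumes it in its first sentence even though the lemma omits it. The decisive point, however, is the case you flagged as ``genuinely problematic,'' namely \(f(x)=f(u)=f(v)\) for a common neighbour \(x\): no further consequence of continuity can close it, because the lemma is false there. Take \(G=K_3\) on \(\{u,v,x\}\) and \(H=K_2\) with vertices \(p,t\) and edge \(pt\); let \(f\) send every vertex of \(G\) and the edges \(uv,ux\) to \(p\), and the edge \(vx\) to \(pt\). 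This \(f\) is a vertex map and is continuous, since \(f^{-1}(S_H(p)) = B(G)\) and \(f^{-1}(S_H(t)) = \{vx\}\) is open (any set of edges is open by Theorem \ref{BasicProperty}, and Remark \ref{ContinuityviaSub-basis} makes these two checks sufficient). Yet \(\phi_{u,v}(ux) = \phi_{u,v}(vx)\) while \(f(ux)=p \neq pt = f(vx)\), so no \(h\) can make the diagram commute. The paper's own proof breaks exactly here: its ``scenario 1'' asserts there is only one edge of \(H\) incident to both \(f(x)\) and \(f(u)\), which is true only when \(f(x)\neq f(u)\), and it ignores the mixed case where one of \(f(xu), f(xv)\) is a vertex and the other an edge. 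So your diagnosis is more accurate than the paper's argument, and your instinct that the hypotheses must be strengthened is the only correct way forward.

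One step of your proposal is nevertheless wrong: on the edge-contraction fiber you claim Theorem \ref{ContinuityPreservesIncidence} forces \(f(uv)=p\). It does not; it also allows \(f(uv)\) to be an edge incident to \(p\), and this really occurs. Take \(G=K_2\) on \(\{u,v\}\) and the same \(H\), with \(f(u)=f(v)=p\) and \(f(uv)=pt\); again \(f\) is a continuous vertex map (\(f^{-1}(S_H(t)) = \{uv\}\) is open), but \(\phi_{u,v}(uv)=w\), so \(h(w)\) would have to equal both \(p\) and \(pt\). (The paper's commutativity check silently skips the case \(y=uv\), so it misses this failure as well.) A repair consistent with how the lemma is used later: prove directly from Theorem \ref{ContinuityPreservesIncidence} that the vertex restriction of any continuous vertex map is a weak graph homomorphism, then replace \(f\) by the induced map of Theorem \ref{WeakHomomorphismareContinuousVertexMap}, which agrees with \(f\) on vertices and does factor through \(\phi_{u,v}\). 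Since Theorems \ref{FactorizationofContinuousVertexMaps} and \ref{ContinuousVertexMapsareWeakHomomorphisms} only concern values on vertices, this substitution salvages them.
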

	\begin{proof}
		Let \(f:B(G) \rightarrow B(H)\) a vertex continuous map and \(w \in V(H)\) such that there is at least two distinct vertices \(u\) and \(v\) with \(f(u) = f(v) = w\). From the vertex-identification map
		\[\phi_{u,v}: B(G) \rightarrow B(G/\{u,v\})\]
		where \(u\) and \(v\) are identified to new vertex \(z\) in \(G/\{u,v\}\), we define a function \(g:B(G/\{u,v\}) \rightarrow B(H)\) as follows:
		\begin{enumerate}
			\item \(h(z) = w\),
			\item \(h(x) = f(x)\) if \(x \in V(G/\{u,v\})\) with \(x \neq z\),
			\item \(h(x) = f(x)\) if \(x \in E(G/\{u,v\})\) not incident to \(z\), and
			\item \(h(xz) = f(xu)=f(xv)\) if \(xz \in E(G/\{u,v\})\).
		\end{enumerate}
		We only need to verify \(f(xu)=f(xv)\) to insure that \(h\) is well-defined. By assumption, \(f(x)\) is a vertex. By Theorem \ref{ContinuityPreservesIncidence}, there are two possible scenarios: \begin{enumerate}
			\item \(f(xu)\) and \(f(xv)\) are edges in \(H\) incident to both of \(f(x)\) and \(f(z)\) in which case, there is only one such edge in \(H\).
			\item \(f(xu)\) and \(f(xv)\) are vertices in \(H\) such that
			\[f(xv) = f(v) = w = f(u) = f(xv).\]
		\end{enumerate}
		
		Next, \(h\) is a vertex map by construction. Also, \(h\) preserves incidence relations of vertices and edges and so, the image of open stars in \(B(G/\{u,v\})\) under \(h\) are subsets of open stars in \(B(H)\). Hence, \(h^{-1}(S_{H}(y))\) is open in \(G/\{u,v\}\) for all \(y \in V(H)\). By Remark \ref{ContinuityviaSub-basis}, \(h\) is continuous.
		
		Lastly, we verify that the diagram below commutes:
		\[
		\begin{tikzcd}[column sep=large, row sep=large]
			B(G) \arrow[r, "\phi_{u,v}"] \arrow[rd, "f"'] & B(G/\{u,v\}) \arrow[d, "h"] \\
			& B(H)
		\end{tikzcd}
		\]
		Let \(y \in B(G)\). If \(y\) is a vertex in \(V(G) \setminus \{u,v\}\), then:
		\[
		h(\phi_{u,v}(y)) = h(y) = f(y).
		\]
		If \(y = u\) or \(y = v\), then:
		\[
		h(\phi_{u,v}(y)) = h(z) = w = f(u) = f(v).
		\]
		If \(y \in E(G)\) is not incident to \(u\) or \(v\), then \(\phi_{u,v}(y) = y\) is not incident to \(z\) in \(G/\{u,v\}\). Thus,
		\[
		h(\phi_{u,v}(y)) = h(y) = f(y).
		\]
		If \(y\) is incident to \(u\) or \(v\) in \(G\), say \(y=xu\) for some \(x \in V(G)\), then \(\phi_{u,v}(y) = xz\). Hence,
		\[h(\phi_{u,v}(y)) = h(xz) = f(y).\]
	\end{proof}
	
	\begin{tikzpicture}
		\foreach \i in {1,...,7} {
			\node (v\i) at ({mod(\i-1,3)*2},{-(floor((\i-1)/3))*2}) [circle, draw] {\(v_\i\)};
		}
		\draw (v7) -- (v4);
		\draw (v7) -- (v5);
		\draw (v7) -- (v6);
		\draw (v4) -- (v1);
		\draw (v4) -- (v2);
		\draw (v4) -- (v5);
		\draw (v5) -- (v2);
		\draw (v5) -- (v3);
		\draw (v5) -- (v6);
		\draw (v6) -- (v3);

		\foreach \i in {1,2,3,4,5,7} {
			\node (u\i) at ({mod(\i-1,3)*2 +7},{-(floor((\i-1)/3))*2}) [circle, draw] {\(v_\i\)};
		}

		\draw (u7) -- (u4);
		\draw (u7) -- (u5);
		\draw (u4) -- (u1);
		\draw (u4) -- (u2);
		\draw (u4) -- (u5);
		\draw (u5) -- (u2);
		\draw (u5) -- (u3);
		
	\end{tikzpicture}
	
	\begin{theorem}
		\label{FactorizationofContinuousVertexMaps}
		Let \(G\) and \(H\) be graphs where \(G\) is connected and let
		\[f:B(G) \rightarrow B(H)\]
		be a continuous vertex map. Then there exists a graph \(L\), and continuous vertex maps \(g: B(G) \to B(L)\) and \(h: B(L) \to B(H)\), such that:
		\begin{enumerate}
			\item \(g\) is a contraction map.
			\item \(h\) is an incidence map.
			\item The following diagram commutes:
			\[
			\begin{tikzcd}
				B(G) \arrow[r,"g"] \arrow[rd, "f"] & B(L) \arrow[d, "h"] \\
				& B(H)
			\end{tikzcd}
			\]
		\end{enumerate}
		Moreover, there exists a graph \(L'\), and continuous vertex maps \(g': B(G) \to B(L)\) and \(h': B(L') \to B(H)\), such that:
		\begin{enumerate}
			\item \(g'\) is an incidence map.
			\item \(h'\) is a contraction map.
			\item The following diagram commutes:
			\[
			\begin{tikzcd}
				B(G) \arrow[r,"g'"] \arrow[rd, "f"] & B(L) \arrow[d, "h'"] \\
				& B(H)
			\end{tikzcd}
			\]
		\end{enumerate}
	\end{theorem}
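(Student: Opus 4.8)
The plan is to read off, for each edge of $G$, how $f$ behaves, and then to sort the vertex–identifications implicit in $f$ into the two ``pure'' types, edge contractions and non-adjacent identifications, performing them in the two possible orders. The starting observation is that, since $f$ is a continuous vertex map, Theorem \ref{ContinuityPreservesIncidence} forces for every edge $uv \in E(G)$ exactly one of two alternatives: either $f(u)=f(v)$ (and then $f(uv)=f(u)$), or $f(u)\neq f(v)$ with $f(u)f(v)\in E(H)$ (and then $f(uv)=f(u)f(v)$). Thus $f|_{V(G)}$ is a weak graph homomorphism, and the \emph{only} place adjacency is lost is along edges whose endpoints share an image. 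I record these in the spanning subgraph $G_0$ of $G$ with edge set $\{uv\in E(G): f(u)=f(v)\}$. Since any two vertices in one component of $G_0$ have the same $f$-value (propagating equality along the connecting path), the components of $G_0$ are exactly the connected components of the induced subgraphs $G[f^{-1}(w)]$, $w\in V(H)$.

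For the first factorization I would take $L=G/G_0$, obtained by contracting each connected component of $G_0$ to one vertex, with $g:B(G)\to B(L)$ the resulting map. Because each component is connected, $g$ is a finite sequence of edge contractions (contract a spanning tree of each component), hence a contraction map and a continuous vertex map by Theorem \ref{VertexIdentificationisContinuous}. No loops or parallel edges survive: a would-be loop comes from an edge with both ends in a single $G_0$-component, which then lies in $G_0$ and is contracted, while parallel edges are merged by the set-definition of the contracted edge set in Definition \ref{Vertex-Identification}. I then define $h:B(L)\to B(H)$ by $h([v])=f(v)$ on vertices and by sending the class of a non-collapsed edge $uv$ to $f(u)f(v)$; this is well defined because each $G_0$-component is constant on $f$ and all $G$-edges between two fixed components carry the same image. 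By the dichotomy above $h$ sends vertices to vertices and edges to edges and preserves incidence, so $h^{-1}(S_H(y))$ is open for each $y$ (Theorem \ref{BasicProperty}, Remark \ref{ContinuityviaSub-basis}); thus $h$ is an incidence map and $h\circ g=f$ by construction.

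For the second factorization the idea is to perform the identifications of $f$ in the opposite order: first glue together the \emph{different} components of each fibre (these are pairwise non-adjacent, so gluing them is a homomorphism), and only afterwards contract the now-connected fibres. For each $w\in V(H)$ pick a representative $p_{w,i}$ in every component $C_{w,i}$ of $G[f^{-1}(w)]$, and let $g':B(G)\to B(L')$ be the composite of the vertex-identifications merging $p_{w,1},\dots,p_{w,k_w}$ to one vertex $q_w$, over all $w$. At each stage the two vertices being identified lie in distinct components of a common fibre, hence are non-adjacent; by Definition \ref{Vertex-Identification} such an identification maps edges to edges (there is no edge between the identified vertices to collapse) and preserves adjacency, and it is continuous by Theorem \ref{VertexIdentificationisContinuous}, so it is an incidence map. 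Since incidence maps compose, $g'$ is an incidence map. After the gluing, $g'(f^{-1}(w))$ is connected (each $g'(C_{w,i})$ is connected and they all share $q_w$), so setting $h'([v])=f(v)$ makes each fibre $(h')^{-1}(w)=B\bigl(L'[\,g'(f^{-1}(w))\,]\bigr)$ a connected induced subgraph; contracting these fibres is then a contraction map (cf. Theorem \ref{ContractionPreimageConnectedSets}) with $h'\circ g'=f$.

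The step I expect to be the main obstacle is this second factorization, for two reasons. First, I must check the chosen merging stays legitimate throughout: after identifying $p_{w,1},p_{w,2}$ into a vertex $q$, the neighbourhood of $q$ is the union of the two old neighbourhoods, and no representative of a distinct fibre-component can be adjacent to either (an edge between them would place them in the same component), so $q$ remains non-adjacent to $p_{w,3}$ and the inductive gluing is valid. Second, for $h'$ to be a contraction map \emph{onto $H$} rather than merely onto the image $f(V(G))$, I must account for vertices and edges of $H$ absent from the image; since an incidence map need not be surjective, I would attach to $L'$ a disjoint copy of the subgraph of $H$ outside $f(G)$, together with one auxiliary edge into the appropriate fibre for each edge of $H$ that joins the image to its complement, leaving these added cells outside $g'(B(G))$ and letting $h'$ fix them, so that contracting the fibres recovers all of $H$. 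Verifying that this augmentation keeps $g'$ an incidence map and $h'$ a genuine contraction — connected induced-subgraph fibres, and the correct edge set after contraction — is the most delicate bookkeeping; everything else reduces to the edge-dichotomy together with Theorems \ref{ContinuityPreservesIncidence}, \ref{VertexIdentificationisContinuous}, and \ref{ContractionPreimageConnectedSets}.
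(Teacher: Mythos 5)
Your opening dichotomy is where the argument breaks, and both of your factorizations lean on it. You claim that Theorem \ref{ContinuityPreservesIncidence} forces, for an edge $uv$ with $f(u)=f(v)$, that $f(uv)=f(u)$. It does not: when $f(u)=f(v)=w$, that theorem only says $f(uv)$ is either $w$ itself or \emph{some} edge of $H$ incident to $w$. Concretely, take $G=H=K_2$ with $V(G)=\{u,v\}$, $V(H)=\{w,z\}$, and set $f(u)=f(v)=w$, $f(uv)=wz$. This is a continuous vertex map, since $f^{-1}(S_H(w))=B(G)$ and $f^{-1}(S_H(z))=\{uv\}$ are open, yet $f(uv)\neq f(u)$. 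Your construction puts $uv$ into $G_0$, so $g$ contracts it, and then $h(g(uv))$ is the vertex $w$ while $f(uv)=wz$: the diagram does not commute. (For this $f$ the correct factorization is trivial --- $f$ sends edges to edges, hence is itself an incidence map, and $L=G$, $g=\mathrm{id}$, $h=f$ works; this is exactly the base case of the paper's induction, which proves part 1 by contracting one offending edge at a time via Lemma \ref{ContinuousVertexMapsFactorization} rather than contracting $G_0$ in one step.) Nor can you repair this by redefining $G_0$ as the set of edges mapped to vertices: let $G$ be the triangle on $u,v,x$ and $H=K_2$, with all three vertices and the edges $ux,vx$ sent to $w$ but $uv$ sent to $wz$. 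Again this is a continuous vertex map on a connected graph, but running through every contraction map out of a triangle (the identity, contracting a single edge, or contracting to a point) shows that no contraction/incidence factorization can commute at the cell $uv$. So the case your dichotomy excludes is not bookkeeping; it is a genuine obstruction to the statement itself, and it equally infects the paper's own Lemma \ref{ContinuousVertexMapsFactorization}, whose well-definedness check ``$f(xu)=f(xv)$'' silently omits the mixed case where one of these is a vertex and the other an edge (the triangle example above realizes exactly that case when $u$ and $x$ are identified).

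Two further remarks. Modulo the issue above, your part 1 is essentially the paper's argument done globally instead of inductively, so it is not a genuinely different route. For part 2, the paper offers no proof at all (``We only prove the first part''), and you correctly isolate the real subtlety the paper never faces --- a contraction map $h':B(L')\to B(H)$ is surjective, so $L'$ must be padded to cover $B(H)\setminus f(B(G))$ --- but your augmentation is incomplete: it misses edges of $H$ with \emph{both} endpoints in $f(V(G))$ that have no edge preimage. For instance, map the path $u\,x\,v$ homomorphically onto two sides of the triangle $K_3$ (so $u\mapsto a$, $x\mapsto b$, $v\mapsto c$): all fibres are singletons, your $L'$ is the path itself, and no sequence of edge contractions of a path yields $K_3$. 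That particular omission is repairable (add one auxiliary edge between the fibres of $w_1$ and $w_2$ for every such edge $w_1w_2$), but the dichotomy failure in the first paragraph is not, so the proposal as a whole does not go through.
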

	\begin{proof}
		We only prove the first part. We will use induction on the number of vertices in \(G\). The base case is trivial: if \(G\) has only one vertex, then any continuous vertex map from \(B(G)\) to \(B(H)\) must be a constant map, and we can take \(L = G\) with \(g\) and \(h\) being the identity maps.
		
		Assume now that the theorem holds for all graphs with fewer than \(n\) vertices. Let \(G\) have \(n\) vertices. If \(f\) is a incidence map, then we may take \(L=G\), \(g\) the identity map and \(h=f\). Hence, we assume that there are two adjacent vertices \(u\) and \(v\) in \(G\) such that \(f(u)=f(v)\). By Lemma \ref{ContinuousVertexMapsFactorization}, there is a continuous vertex map \(f':B(G/\{u,v\}) \rightarrow B(H)\) such that the diagram below commutes:
		\[
		\begin{tikzcd}[column sep=large, row sep=large]
			B(G) \arrow[r, "\phi_{u,v}"] \arrow[rd, "f"'] & B(G/\{u,v\}) \arrow[d, "f'"] \\
			& B(H)
		\end{tikzcd}
		\]
		But \(G/\{u,v\}\) is order \(n-1\). By induction, there is a graph \(L\) and continuous maps \(g: B(G/\{u,v\}) \to B(L)\) and \(h: B(L) \to B(H)\), such that:
		\begin{enumerate}
			\item \(g\) is a contraction map.
			\item \(h\) is an incidence map.
			\item The following diagram commutes:
			\[
			\begin{tikzcd}
				B(G/\{u,v\}) \arrow[r,"g"] \arrow[rd, "f'"] & B(L) \arrow[d, "h"] \\
				& B(H)
			\end{tikzcd}
			\]
		\end{enumerate}
		Then \(g \circ \phi_{u,v}\) is a contraction map such that the diagram below is a commutative.
		\[
		\begin{tikzcd}[column sep=large, row sep=large]
			B(G) \arrow[r, "g \circ \phi_{u,v}"] \arrow[rd, "f"'] & B(L) \arrow[d, "h"] \\
			& B(H)
		\end{tikzcd}
		\]
	\end{proof}
	
	An immediate consequence of Remark \ref{IncidenceMapandIndependentSets}, Corollary \ref{ContractionPreimageConnectedSets} and Theorem \ref{FactorizationofContinuousVertexMaps} is a result describing continuous functions via incident maps and contraction maps.
	
	\begin{corollary}
		\label{ContinuousVertexMapsPreimage}
		Let \(f: B(G) \rightarrow B(H)\) be a continuous vertex map and let \(v\) be a vertex of \(G\). Then there exists a collection \(\{G_1,G_2,\ldots,G_k\}\) of connected induced subgraphs of \(G\) all mapped to \(v\) by \(f\) such that no edges of \(G\) is connecting a vertex in \(G_i\) and a vertex in \(G_j\) for all \(1 \leq i < j \neq k\).
		
		Moreover, there exists a collection \(\{S_1,S_2,\ldots,S_l\}\) of independent vertex sets of vertices all mapped to \(v\) by \(f\) such that a vertex in \(S_i\) is adjacent to some vertex in \(S_j\) for all \(1 \leq i < j \neq l\).
		
	\end{corollary}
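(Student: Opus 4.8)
The plan is to read off both collections directly from the two factorizations supplied by Theorem \ref{FactorizationofContinuousVertexMaps}, in each case understanding the fibre $f^{-1}(v)$ through whichever factor is a contraction and whichever is an incidence map. Since that theorem assumes $G$ connected, I would first reduce to this case: a continuous map carries each connected component of $B(G)$ into a single component of $B(H)$, so $f^{-1}(v)$ meets only finitely many components of $G$ and it suffices to argue on each (subgraphs coming from distinct components are automatically vertex-disjoint and edge-free between them). I also read $v$ as a vertex of $H$ lying in $f(V(G))$, which is forced, since $f$ is a vertex map and the subgraphs in question are to be collapsed onto $v$.

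For the first collection I would apply the factorization $f = h \circ g$ with $g : B(G) \to B(L)$ a contraction map and $h : B(L) \to B(H)$ an incidence map, so that $f^{-1}(v) = g^{-1}\!\bigl(h^{-1}(v)\bigr)$. Because $h$ is an incidence map, Remark \ref{IncidenceMapandIndependentSets} shows $h^{-1}(v)$ is an independent set of vertices $\{w_1,\dots,w_k\}$ in $L$, and it contains no edges since incidence maps send edges to edges. For each $w_i$, Theorem \ref{ContractionPreimageConnectedSets} applied to $g$ yields a connected induced subgraph $G_i := G[S_i]$ with $g^{-1}(w_i) = B(G_i)$, whence $f^{-1}(v) = \bigcup_{i=1}^{k} B(G_i)$; these $G_i$ are the required subgraphs.

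The one genuine verification, and the step I expect to carry the weight of the argument, is that no edge of $G$ joins $G_i$ to $G_j$ when $i \neq j$. I would argue by contradiction: suppose $xy \in E(G)$ with $x \in S_i$ and $y \in S_j$. Applying Theorem \ref{ContinuityPreservesIncidence} to $g$ at the incident pair $(x, xy)$ forces $g(xy)$ to equal $w_i$ or to be an edge incident to $w_i$; applying it at $(y, xy)$ forces $g(xy)$ to equal $w_j$ or to be an edge incident to $w_j$. Comparing cases, $g(xy)$ a vertex forces $w_i = w_j$, contradicting distinctness, while $g(xy)$ an edge forces it to be $w_i w_j \in E(L)$, contradicting the independence of $h^{-1}(v)$.

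For the second collection I would use the mirror factorization $f = h' \circ g'$ of the same theorem, now with $g' : B(G) \to B(L')$ an incidence map and $h'$ a contraction map, so $f^{-1}(v) = g'^{-1}\!\bigl(h'^{-1}(v)\bigr)$. Here Theorem \ref{ContractionPreimageConnectedSets} applies to $h'$ and produces a single connected induced subgraph $L'[T]$ with $h'^{-1}(v) = B(L'[T])$; writing $T = \{w_1,\dots,w_l\}$, each fibre $S_i := g'^{-1}(w_i) \cap V(G)$ is independent by Remark \ref{IncidenceMapandIndependentSets}, and these exhaust the vertices of $f^{-1}(v)$. The adjacency pattern among the $S_i$ then reflects the connectedness of $L'[T]$: because $g'$ is induced by a graph homomorphism, every edge $w_i w_j \in E(L'[T])$ lifts to an edge of $G$ joining $S_i$ and $S_j$, so the connectedness of $L'[T]$ is precisely the assertion that the sets $S_i$ are linked by such inter-set edges. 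The main subtlety I foresee is matching this clean homomorphism-pullback description to the exact combinatorial wording of the adjacency condition, and making sure the component-wise reduction in the disconnected case assembles the fibres without omission or double-counting.
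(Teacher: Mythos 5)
Your route is the paper's own: the paper offers no written proof of this corollary, declaring it immediate from Remark \ref{IncidenceMapandIndependentSets}, Theorem \ref{ContractionPreimageConnectedSets}, and the two factorizations of Theorem \ref{FactorizationofContinuousVertexMaps}, and that is precisely what you unpack (including the correct reading of \(v\) as a vertex of \(H\) lying in \(f(V(G))\)). Your first half is correct, and more careful than the paper: the reduction to connected components, the identification \(f^{-1}(v)=\bigcup_{i} g^{-1}(w_i)\) with \(\{w_1,\dots,w_k\}=h^{-1}(v)\) an independent set of vertices, and the contradiction via Theorem \ref{ContinuityPreservesIncidence} (a vertex image forces \(w_i=w_j\); an edge image forces \(w_iw_j\in E(L)\), contradicting independence) are all sound.

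The second half has a genuine gap, and it is twofold. First, the step ``every edge \(w_iw_j\in E(L'[T])\) lifts to an edge of \(G\) joining \(S_i\) and \(S_j\) because \(g'\) is induced by a graph homomorphism'' is unjustified: a homomorphism need not be surjective on edges, and Theorem \ref{FactorizationofContinuousVertexMaps} asserts nothing about which edges of \(L'\) are hit by \(g'\); indeed the paper never constructs \(L'\) at all, since it proves only the contraction-then-incidence half of that theorem. Without edge-surjectivity, connectedness of \(L'[T]\) says nothing about edges of \(G\) between the \(S_i\). Second, even granting the lifts, you would only conclude that the pattern of pairs \((S_i,S_j)\) joined by an edge of \(G\) is connected, whereas the corollary asserts that \emph{every} pair \(S_i,S_j\) is joined by such an edge; you flag this mismatch yourself but do not close it. Both problems vanish if you drop the factorization for this half and argue directly: put \(U=f^{-1}(v)\cap V(G)\) and take a partition of \(U\) into independent sets with as few parts as possible (for instance the colour classes of an optimal proper colouring of \(G[U]\); these are independent in \(G\) because \(G[U]\) is an induced subgraph). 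If some pair of parts had no edge of \(G\) between them, their union would again be independent and would yield a partition with fewer parts, a contradiction; hence every pair of parts is joined by an edge, which is exactly the claimed statement, and this argument requires no connectivity hypothesis on \(G\) and no assembly over components.
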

	
	For the next two results, we distinguish the properties of the decompositions describe in Theorem \ref{FactorizationofContinuousVertexMaps}.
	
	\begin{corollary}
		Let \(f: B(G) \rightarrow B(H)\) a continuous vertex map. Then the graph \(L\) in Theorem \ref{FactorizationofContinuousVertexMaps} is unique up to graph isomorphism when \(g\) is a contraction and \(h\) is an incidence map. 
	\end{corollary}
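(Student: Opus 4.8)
The plan is to characterize the graph $L$ intrinsically from $f$ alone, so that any valid factorization $f = h \circ g$ with $g$ a contraction and $h$ an incidence map is forced to produce the same $L$ up to isomorphism. The key structural handle is Theorem \ref{ContractionPreimageConnectedSets}: since $g: B(G) \to B(L)$ is a contraction map, for every vertex $z \in g(V(G))$ the preimage $g^{-1}(z) = B(G[S_z])$ for a set $S_z$ of vertices inducing a \emph{connected} subgraph of $G$. First I would observe that because $h$ is an incidence map (hence injective on vertices restricted to each fiber direction by Remark \ref{IncidenceMapandIndependentSets}) and $f = h \circ g$, the fibers of $g$ refine into the fibers of $f$: two vertices of $G$ collapse together under $g$ only if they already collapse under $f$, and $h$ being a vertex-injective-on-adjacency incidence map cannot further merge the images.

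Next I would argue that the fibers of $g$ are forced to be exactly the connected components of the subgraphs $G[f^{-1}(w)]$ for each $w \in V(H)$. On one hand, any two adjacent vertices $x, y$ with $f(x) = f(y)$ must be identified by $g$: if they were not, then $g(x) \neq g(y)$ would be adjacent vertices in $L$ both mapped by the incidence map $h$ to the single vertex $f(x)$, contradicting that an incidence map sends edges to edges (equivalently, by Remark \ref{IncidenceMapandIndependentSets}, $h^{-1}(f(x))$ must be independent). On the other hand, $g$ cannot identify two vertices lying in different connected components of $G[f^{-1}(w)]$, since by Theorem \ref{ContractionPreimageConnectedSets} each fiber of a contraction map induces a \emph{connected} subgraph. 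Therefore the vertex set of $L$ is in canonical bijection with the set of connected components of the graphs $G[f^{-1}(w)]$ as $w$ ranges over $f(V(G))$ — a collection determined entirely by $f$ and the structure of $G$, independent of the chosen factorization.

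Finally I would promote this bijection of vertex sets to a graph isomorphism. Given two factorizations $f = h \circ g$ and $f = \tilde h \circ \tilde g$ with vertex sets $V(L)$ and $V(L')$ both identified with the same component-set of $G$, I define the map on vertices by matching components, and check it preserves edges: an edge of $L$ arises exactly when two distinct components $C_i, C_j$ of the fibers are joined by an edge of $G$, and whether such an edge exists is a property of $G$ and the partition into components, not of the factorization. Here I would lean on the Corollary \ref{ContinuousVertexMapsPreimage} description, which already packages the fibers as connected induced subgraphs with no edges running between distinct parts mapping to the same target vertex, while edges between parts mapping to \emph{adjacent} target vertices are governed by $h$ being an incidence map. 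The main obstacle I anticipate is the bookkeeping at this last step: I must verify that the adjacency in $L$ is read off unambiguously from $G$ — in particular that no two distinct edges of $G$ between the same pair of components can create a multi-edge or ambiguity in the simple graph $L$, and that edges of $H$ in the image of $h$ correspond bijectively to the between-component edges of $G$. Once adjacency is shown to be an invariant of $f$, the component-matching bijection is automatically a graph isomorphism $L \cong L'$, completing the proof.
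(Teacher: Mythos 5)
Your proposal is correct and takes essentially the same route as the paper's own proof: both identify the fibers of the contraction $g$ with the connected components of the subgraphs $G[f^{-1}(w)]$, $w \in V(H)$, using that contraction fibers induce connected subgraphs (Theorem \ref{ContractionPreimageConnectedSets}) and that incidence maps have independent vertex fibers (Remark \ref{IncidenceMapandIndependentSets}), so that $V(L)$ and then $L$ itself are determined by $f$ alone. If anything, your write-up is more complete than the paper's, which asserts that the components ``determine'' $L$ without explicitly checking the two points you isolate: that adjacent vertices with equal $f$-image are forced to merge under $g$, and that the adjacency relation of $L$ is read off unambiguously from the edges of $G$ running between distinct components.
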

	\begin{proof}
		By Corollary \ref{ContinuousVertexMapsPreimage}, the preimage of a vertex \(v\) of \(H\) under \(f\) consists of connected induced subgraphs of \(G\), say \(G_1,G_2, \ldots, G_k\), such that no edges connects an edge from a vertex in \(G_i\) to a vertex in \(G_j\) for \(1 \leq i<j \leq k\).
		
		Suppose that \(L\) is a graph satisfying Theorem \ref{FactorizationofContinuousVertexMaps} when \(g\) is a contraction and \(h\) is an incidence map. For each \(1 \leq i \leq k\), the graph \(G_i\) has to collapse to a single vertex \(x_i\) under \(g\) where \(x_1,x_2,\ldots, x_k\) are all distinct vertices of \(L\) forming an independent vertex set. Hence, the graphs \(G_1,G_2, \ldots, G_k\) determines uniquely up to graph isomorphism, the graph \(L\).
	\end{proof}
	
	Lastly, the next result provides us with a characterization of weak graph homomoprhism.
	
	\begin{theorem}
		\label{ContinuousVertexMapsareWeakHomomorphisms}
		A vertex map between the bare representations of two graphs is continuous if and only if its restriction
		to the vertices is a weak graph homomorphism.
	\end{theorem}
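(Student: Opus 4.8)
The plan is to first translate continuity of a vertex map into a purely local incidence condition, and then read off both implications from that reformulation. By Remark \ref{ContinuityviaSub-basis} it suffices to test continuity on the subbasic stars, so $f$ is continuous if and only if $f^{-1}(S_H(y))$ is open for every $y \in V(H)$. By Theorem \ref{BasicProperty}(3) this set is open exactly when it contains $S_G(x)$ for each of its vertices $x$. Since $f$ is a vertex map and $y$ is the only vertex lying in $S_H(y)$, a vertex $x$ belongs to $f^{-1}(S_H(y))$ precisely when $f(x)=y$. Unwinding this, $f$ is continuous if and only if the following condition $(\star)$ holds: for every vertex $x \in V(G)$ and every edge $e$ incident to $x$, one has $f(e)\in S_H(f(x))$, that is, $f(e)=f(x)$ or $f(e)$ is an edge of $H$ incident to $f(x)$. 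I would isolate this equivalence as the technical heart of the argument, since both directions become transparent once continuity is phrased as $(\star)$.

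For the forward implication, assume $f$ is continuous and let $e=uv\in E(G)$; I want to show $f(u)=f(v)$ or $f(u)f(v)\in E(H)$. Applying Theorem \ref{ContinuityPreservesIncidence} (equivalently $(\star)$) to the incident pairs $(u,e)$ and $(v,e)$ splits into two cases. If $f(e)$ is a vertex, then the alternative ``$f(e)$ is an edge incident to $f(u)$'' cannot occur, forcing $f(u)=f(e)$ and likewise $f(v)=f(e)$, whence $f(u)=f(v)$. If $f(e)$ is an edge $ab\in E(H)$, then $f(u)$ and $f(v)$, being vertices, must each be an endpoint of $ab$, so $f(u),f(v)\in\{a,b\}$; hence either $f(u)=f(v)$, or $\{f(u),f(v)\}=\{a,b\}$ and $f(u)f(v)=ab\in E(H)$. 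In every case the weak homomorphism condition is satisfied, so $f|_{V(G)}$ is a weak graph homomorphism.

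For the converse, assume $g=f|_{V(G)}$ is a weak graph homomorphism and verify $(\star)$. Fix a vertex $x$ and an incident edge $e=xy\in E(G)$. Because $g$ is a weak homomorphism, either $f(x)=f(y)$ or $f(x)f(y)\in E(H)$, so the star $S_H(f(x))$ already contains the only candidate target for $e$ that incidence permits: the common image $f(x)=f(y)$ in the first case, or the edge $f(x)f(y)$ in the second. This is exactly the edge assignment used to build the induced map in Theorem \ref{WeakHomomorphismareContinuousVertexMap}, and with it $f(e)\in S_H(f(x))$, so $(\star)$ holds and $f$ is continuous.

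The main obstacle I anticipate is the backward direction, and specifically the status of the edge component of the vertex map. The weak homomorphism hypothesis constrains only the images of vertices, whereas condition $(\star)$ is a statement about the images of edges; continuity therefore genuinely requires that each edge $e=xy$ be sent into $S_H(f(x))\cap S_H(f(y))$. The clean way to handle this is to note that $(\star)$ forces the edge image to be the unique element of $H$ determined by the endpoints — the vertex $f(x)=f(y)$, or the single edge $f(x)f(y)$ — so the real content of the theorem is that the weak homomorphism condition is precisely what makes this forced assignment land inside the relevant star. I would make this dependence explicit, aligning the edge map with the one of Theorem \ref{WeakHomomorphismareContinuousVertexMap}, so that the equivalence reads cleanly as continuity of the vertex map $\Longleftrightarrow$ $(\star)$ $\Longleftrightarrow$ $f|_{V(G)}$ being a weak graph homomorphism.
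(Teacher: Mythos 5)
Your proof is correct, but it takes a genuinely different route from the paper's. For the forward direction the paper invokes its factorization machinery (Theorem \ref{FactorizationofContinuousVertexMaps}): a continuous vertex map factors as a contraction map followed by an incidence map, and the weak-homomorphism property of the vertex restriction is then read off from that decomposition. You instead argue locally: you reformulate continuity of a vertex map as the condition $(\star)$ that $f(e)\in S_H(f(x))$ for every incident vertex--edge pair (via Remark \ref{ContinuityviaSub-basis} and Theorem \ref{BasicProperty}), and then apply Theorem \ref{ContinuityPreservesIncidence} at both endpoints of an edge, splitting on whether $f(e)$ is a vertex or an edge. This is more elementary, since it bypasses Lemma \ref{ContinuousVertexMapsFactorization} and the induction behind the factorization theorem, and it is also slightly more general: Theorem \ref{FactorizationofContinuousVertexMaps} is stated only for connected $G$, so the paper's forward direction implicitly carries a connectivity hypothesis that your argument does not need. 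For the converse, both you and the paper ultimately lean on Theorem \ref{WeakHomomorphismareContinuousVertexMap}, and your closing paragraph correctly isolates the one genuine subtlety there: the weak-homomorphism hypothesis constrains only vertex images, so an arbitrary vertex map whose vertex restriction is a weak homomorphism need not be continuous (send the edge of $K_2$ to the image of one endpoint while the endpoints go to distinct adjacent vertices); continuity holds once the edge images are the induced ones, $xy\mapsto f(x)f(y)$, or $f(x)$ when $f(x)=f(y)$. That imprecision sits in the theorem statement itself, and the paper's proof glosses over it in exactly the same way, so your explicit handling of the edge assignment is a net improvement rather than a defect.
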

	\begin{proof}
		Let \(f:B(G) \rightarrow B(H)\) be a continuous vertex map. By Theorem \ref{FactorizationofContinuousVertexMaps}, \(f\) is a composition of a contraction map and an incidence map. Hence, if \(uv\) is an edge in \(G\) then either \(f(u)=f(v)\) or \(f(u)\) and \(f(v)\) are adjacent in \(H\). In other words, the restriction of \(f\) is a weak homomorphism.
		
		The converse is given by Theorem \ref{WeakHomomorphismareContinuousVertexMap}.
	\end{proof}
	
	\section{Continuous surjective maps and related problems}
	
	This section explores problems in graph theory that can interpreted in terms of or related to surjective maps.
	
	\subsection{Walks and Cycles}
	
	Given a closed walk \(W\) of length \(n\) in a graph \(G\) that contains all the edges of \(G\), there is a graph homomorphism from \(C_n\) to \(G\). This homomorphism is surjective and so, the induced continuous vertex map from \(B(C_n)\) to \(B(G)\) has to be surjective.
	
	Questions that may arise are the following:
	\begin{enumerate}
		\item Given a graph \(G\), what is the smallest natural number \(m\) such that there is a continuous surjective map from \(B(C_m)\) to \(B(G)\)?
		\item Given a graph \(G\), what is the largest integer \(n\) such that there is a continuous surjective map from \(B(G)\) to \(B(K_n)\)?
	\end{enumerate}
	The first question is the same as determining the shortest closed walk on a graph \(G\) or the Chinese Postman Problem \cite{edmonds1973matching,guan1962graphic}.
	
	For the second problem, we can bound the size of \(n\). Given a graph \(G\), suppose there is a continuous surjective map \(f\) from \(B(G)\) to \(B(K_n)\). By Theorem \ref{ContinuousmapsmeansContinuousVertexmap}, we may assume that \(f\) is a vertex map. Then surjectivity requires the size of \(G\) to be not less than the size of \(K_n\). In other words, there is a largest integer \(n\) such that there is a continuous map from \(B(G)\) to \(B(K_n)\) and we denote this by \(\theta(G)\). In the next section, we relate \(\theta(G)\) to the chromatic number of a graph.
	
	\subsection{Coloring of graphs}
	
	We may rephrase the definition of coloring in graphs in the context of continuous maps between bare representations. In this case, a coloring of a graph \(G\) is an incidence map from \(B(G)\) to \(B(K_n)\) such that the image of the vertex set of \(G\) is precisely the the vertex set of \(K_n\) for some natural number \(n\) and we say that \(G\) is \(n\) colorable. The chromatic number \(\chi(G)\) is the smallest \(n\) for which \(G\) is colorable. Then the chromatic number \( \chi(G) \) can be characterized as the minimal integer \( n \) such that there exists a surjective incidence map from \( B(G) \) to \( B(K_n) \).
	
	\begin{theorem}
		\label{ColoringintermsofVertexMaps}
		Let \(G\) be an \(n\)-colorable graph. Then \(\chi(G) = n\) if and only if every incidence map from \(B(G)\) to \(B(K_n)\) is surjective.
	\end{theorem}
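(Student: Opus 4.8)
The plan is to translate the statement entirely into the language of proper colorings via the correspondence between incidence maps and graph homomorphisms, and then invoke the classical observation that a coloring realizing the chromatic number is necessarily a \emph{complete} coloring. First I would record the dictionary. By Remark \ref{IncidenceMapandIndependentSets}, an incidence map \(f : B(G) \to B(K_n)\) is the same data as a graph homomorphism \(G \to K_n\), and since \(K_n\) is complete this is exactly a proper coloring \(c : V(G) \to V(K_n)\) assigning distinct colors to adjacent vertices, with the edge \(uv\) of \(G\) sent to the edge \(c(u)c(v)\) of \(K_n\). Under this dictionary, surjectivity of \(f\) as a map of sets \(B(G) \to B(K_n)\) unpacks into two conditions: every vertex (color) of \(K_n\) is used, and every edge \(ij\) of \(K_n\) is realized, i.e.\ for each pair of colors \(i,j\) there is an edge of \(G\) whose endpoints receive colors \(i\) and \(j\).

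Then I would prove each direction by contraposition. For the forward direction, assume \(\chi(G) = n\) and suppose some incidence map \(f\) fails to be surjective. If a color \(i\) is unused, then \(c\) is a proper coloring with at most \(n-1\) colors, forcing \(\chi(G) \le n-1\), a contradiction. If instead all colors are used but some edge \(ij\) of \(K_n\) is missed, then there is no edge of \(G\) between the color classes \(V_i\) and \(V_j\); merging these two classes (legitimate, since \(V_i \cup V_j\) is then independent) produces a proper coloring with \(n-1\) colors, again contradicting \(\chi(G) = n\). Hence every incidence map is surjective.

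For the converse, assume \(G\) is \(n\)-colorable and \(\chi(G) \ne n\), so that \(\chi(G) = m\) with \(m < n\). A proper \(m\)-coloring, with its colors regarded as a subset \(\{1,\dots,m\}\) of \(V(K_n) = \{1,\dots,n\}\), yields an incidence map \(B(G) \to B(K_n)\) whose image omits the vertex \(n\); this map is therefore not surjective, so not every incidence map is surjective. This establishes the contrapositive and completes the equivalence.

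I would flag the merge step as the only real content. Vertex surjectivity is immediate from the minimality defining \(\chi(G)\), but edge surjectivity is precisely the assertion that a \(\chi(G)\)-coloring is a complete coloring, and the observation that merging two color classes with no connecting edge yields a valid \((n-1)\)-coloring is where the hypothesis \(\chi(G) = n\) is genuinely used. The main point to be careful about is fixing the meaning of \emph{surjective} as surjectivity on all of \(B(K_n)\), vertices and edges together, so that the edge condition---and hence the merge argument---actually enters; if one read surjectivity only on vertices, the theorem would collapse to the much weaker statement that every proper coloring into \([n]\) uses all \(n\) colors.
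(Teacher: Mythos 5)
Your proposal is correct and follows essentially the same route as the paper: both translate incidence maps into proper colorings, handle the forward direction by the two cases of a missed vertex (an unused color) and a missed edge (merging/identifying the two corresponding color classes to get an $(n-1)$-coloring), and handle the converse by composing a smaller coloring with an inclusion into $K_n$ to produce a non-surjective incidence map. The only differences are presentational (contraposition versus contradiction, and merging classes in $G$ rather than identifying the two endpoints in $K_n$, which is the same operation under the dictionary).
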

	\begin{proof}
		Let \(\chi(G) = n\). Suppose \(f:B(G) \rightarrow B(K_n)\) is an incidence map but \(f\) is not surjective. If there is \(v \in V(K_n)\) has no pre-image in \(G\), since \(f\) is an incidence map, no vertex of \(G\) is mapped to \(v\) by \(f\) and also, no edges were mapped to the edges incident to \(v\). Hence, the image of \(G\) under \(f\) is a complete graph with size at most \(n-1\). We can use this to color \(G\) and therefore contradicts the minimality of \(n\). If there is an edge \(ab\) in \(K_n\) with \(a,b \in V(K_n)\) that has no pre-image in \(B(G)\), then we can identify \(a\) and \(b\) so that we have an incidence map from \(B(G)\) to \(B(K_{n-1})\). Again, contradicting the minimality of \(n\). Thus, \(f\) must be surjective.
		
		Conversely, it suffices to show that \(n\) is the smallest natural number for which \(G\) is \(n\)-colorable. Suppose that \(G\) is at most \(n-1\) colorable. Then there exists an incidence map \(f\)
		\[B(G) \overset{f}{\rightarrow} B(K_{n-1}).\]
		Consider an injective vertex map \(i\),
		\[B(K_{n-1}) \overset{i}{\rightarrow}B(K_n).\]
		Then there is an edge \(e\) in \(K_n\) with no pre-image edge in \(K_{n-1}\) over \(i\). The assumption implies the the composition \(i f\) is surjective. So, there is an edge \(e'\) in \(G\) such that \(i f (e') = e\). Since \(f\) is an incidence map, \(f(ab)\) is an edge in \(K_{n-1}\) mapped to \(e\) over \(i\). This contradicts the definition of \(e\). Hence, \(n\) is the smallest integer for which \(G\) is \(n\)-colorable and therefore, \(n\) is the chromatic number of \(G\).
	\end{proof}
	
	\begin{remark}
		It follows immediately from Theorem \ref{ColoringintermsofVertexMaps} that
		\begin{equation}
			\label{Chromaticislessthantheta}
			\chi(G) \leq \theta(G).
		\end{equation}
		However, inequality \eqref{Chromaticislessthantheta} is not a good bound. In fact, given a natural number \(n \geq 3\), we can find a cycle \(C_m\) such that \(\theta (C_m) = n\) but \(\chi(C_m) \leq 3\). So \(\theta(G) - \chi(G)\) can be made arbitrarily large.
	\end{remark}

	\section{Conclusion}
	\label{Conclusion:sec}
	
	The star topology on the bare representations provide a novel framework for
	analyzing continuity and homomorphisms. While every homomorphism induces a
	continuous map, the converse fails due to potential loss of adjacency
	information. This distinction underscores the need for a nuanced understanding of
	the interplay between topological and combinatorial structures in graphs.
	
	\bibliographystyle{plain}
	\bibliography{References}
	
\end{document}